\newtheorem{theorem}{Theorem}[section]
\newtheorem{lemma}[theorem]{Lemma}
\newtheorem{proposition}[theorem]{Proposition}
\newtheorem{corollary}[theorem]{Corollary}
\theoremstyle{definition}
\newtheorem{question}[theorem]{Question}
\theoremstyle{remark}
\newtheorem{remark}[theorem]{Remark}
\DeclareMathOperator{\tr}{tr}
\DeclareMathOperator{\PSL}{PSL}
\DeclareMathOperator{\SL}{SL}
\newcommand{\Cset}{\mathbb{C}}
\newcommand{\Rset}{\mathbb{R}}
\newcommand{\Qset}{\mathbb{Q}}
\newcommand{\Zset}{\mathbb{Z}}
\newcommand{\Hset}{\mathbb{H}}
\title{The realization problem for J\o rgensen numbers}
\author{Yasushi Yamashita}
\address{Nara Women's University, Kitauoyanishi-machi, Nara-shi, Nara 630-8506, Japan}
\email{yamasita@ics.nara-wu.ac.jp}
\thanks{This work was supported by JSPS KAKENHI Grant Number 26400088.}
\author{Ryosuke Yamazaki}
\address{Gakushuin Boys' Senior High School, 1-5-1 Mejiro, Toshima-ku, Tokyo 171-8588 Japan}
\email{rsk.yamazaki.ms@gmail.com}
\subjclass[2010]{30F40, 57M50}
\keywords{J\o rgensen's inequality, J\o rgensen number, 
    Kleinian groups}
\begin{document}

\begin{abstract}
Let $G$ be a two generator subgroup of $\PSL(2, \Cset)$.
The J\o rgensen number $J(G)$ of $G$ is defined by
\[ 
    J(G) = \inf\{ |\tr^2 A-4| + |\tr[A,B]-2| \: ; \:
    G=\langle A, B\rangle \}.
\]
If $G$ is a non-elementary Kleinian group,
then $J(G)\geq 1$.
This inequality is called J\o rgensen's inequality.
In this paper, we show that,
for any $r\geq 1$, there exists a non-elementary 
Kleinian group whose J\o rgensen number is equal to $r$.
This answers a question posed by Oichi and Sato.
We also present our computer generated picture
which estimates J\o rgensen numbers from above
in the diagonal slice of Schottky space.
\end{abstract}

\maketitle

\section{Introduction}

Let $G$ be a two generator subgroup of $\PSL(2, \Cset)$.
Determining whether or not $G$ is discrete is 
an important problem in Kleinian group theory.
In 1976, J\o rgensen \cite{MR0427627} showed that 
if $G=\langle X, Y\rangle$ is a rank two non-elementary discrete
subgroup of $\PSL(2, \Cset)$ (Kleinian group), then
\begin{equation}\label{eqn:Jorgensenineq}
  |\tr^2 X - 4| + |\tr [X, Y] -2| \geq 1,
\end{equation}
where $[X, Y] = X Y X^{-1} Y^{-1}$.
The J\o rgensen number of an ordered pair $(X, Y)$ is defined as 
\[
  J(X, Y) := |\tr^2 X - 4| + |\tr [X, Y] -2|
\]
and the J\o rgensen number of a rank two non-elementary Kleinian group $G$
is defined as
 \[
  J(G) := \inf \{ J(X, Y) | \langle X, Y\rangle = G \}.
\]
By (\ref{eqn:Jorgensenineq}), we have $J(G)\geq 1$.
We can think of J\o rgensen number as measuring
how far from being indiscrete.
J\o rgensen's inequality is sharp, and
if $J(G)=1$, $G$ is called a J\o rgensen group,
There are many results on J\o rgensen groups 
in the literature
\cite{MR1040927,MR0399452,MR2107659,MR2205423,MR2153913,MR2107132,MR1759686,MR2032679,MR3289108}.
Also, calculating J\o rgensen numbers can be 
difficult and interesting challenge
\cite{MR2525101,MR2351012,MR2199365}.
Oichi-Sato \cite{OS} asked the following natural question:

\begin{question}[The realization problem]
Let $r$ be a real number with $r\geq 1$.
When is there a non-elementary Kleinian group
whose Jorgensen number is equal to $r$?
\end{question}

Oichi-Sato \cite{OS} claimed that if $r=1, 2, 3$ or $r\geq 4$, 
then there is a non-elementary Kleinian group 
whose J\o rgensen number is equal to $r$.
(See also \cite{MR2525101,MR3289108,YR}.)
Though J\o rgensen's inequality is sharp,
constructing a Kleinian group with small J\o rgensen number
(in particular, less than $4$) is not easy.
To the best of our knowledge,
there is no known example of a classical Schottky group
whose J\o rgensen number is less than $4$.
In this paper, we solve this realization problem.

\begin{theorem}\label{thm:main}
For any real number $r\geq 1$,
there is a rank two non-elementary discrete subgroup of $\PSL(2,\Cset)$
whose J\o rgensen number is $r$.
\end{theorem}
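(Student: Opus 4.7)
The plan is to exhibit a one-parameter family of rank-two non-elementary Kleinian groups $\{G_t\}_{t \in [t_0, t_1]}$ along which the function $t \mapsto J(G_t)$ is continuous, equals $1$ at $t_0$, and is at least $4$ at $t_1$. Combined with the Oichi-Sato constructions (which already realize $r = 1, 2, 3$ and every $r \geq 4$), the intermediate value theorem will then fill in the missing intervals $(1,2) \cup (2,3) \cup (3,4)$ and complete the proof.

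For the family, I would work in the diagonal slice of Schottky space mentioned in the abstract: fix $\tr A_t = \tr B_t = x(t)$ and $\tr(A_t B_t) = z(t)$ so that $(x(t), z(t))$ traces a path inside the discreteness locus, guaranteeing that each $G_t = \langle A_t, B_t \rangle$ is a rank-two discrete non-elementary free Kleinian group. Using the Fricke identity
\[
  \tr[A,B] - 2 = \tr^2 A + \tr^2 B + \tr^2 AB - \tr A \tr B \tr AB - 4,
\]
the commutator term factors on this slice as $\tr[A_t,B_t] - 2 = (z-2)(z - x^2 + 2)$, so
\[
  J(A_t, B_t) = |x^2 - 4| + |(z-2)(z - x^2 + 2)|
\]
is a manifestly continuous function of $t$. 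Starting from a J\o rgensen group (where $J(A_{t_0}, B_{t_0}) = 1$) and ending at a group with $J \geq 4$, I can choose the path so that this expression sweeps out an interval $[1, R]$ with $R \geq 4$.

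This already delivers the upper bound $J(G_t) \leq J(A_t, B_t)$. The heart of the argument is the matching lower bound: no other generating pair of $G_t$ yields a smaller J\o rgensen number. Since $G_t$ is free of rank two, every generating pair is Nielsen-equivalent to $(A_t, B_t)$, and Nielsen moves act on the trace triple $(\tr A, \tr B, \tr AB)$ by the Markov involutions. I would argue that each elementary Nielsen move applied to $(A_t, B_t)$ does not decrease $J$, using the geometric fact that the Schottky generators realize the shortest translation lengths in $G_t$ (so they minimize $|\tr^2 - 4|$) together with explicit estimates on how the commutator term changes under a Markov move.

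This last step is the main obstacle. Under a Nielsen move the trace of the new generator grows, pushing $|\tr^2 - 4|$ up, but the commutator term $|\tr[\cdot,\cdot] - 2|$ could in principle shrink, so a term-by-term argument is insufficient. A careful combined estimate, likely organized along the Farey tessellation that indexes primitive conjugacy classes in $F_2$ and exploiting the uniform geometry of the Schottky groups along the chosen path, should yield monotonicity of $J$ under Nielsen moves. Once this is in hand, continuity of $t \mapsto J(G_t)$ together with the intermediate value theorem produce every value in $[1, R]$, and hence every $r \geq 1$.
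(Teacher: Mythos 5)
Your proposal has the right top-level shape (a continuous one-parameter family sweeping $J$ from $1$ up to $4$, which is indeed how the paper fills in $1\le r\le 4$), but the step you yourself flag as ``the main obstacle'' --- the matching lower bound $J(G_t)\ge J(A_t,B_t)$ --- is not an estimate you have deferred, it is the entire content of the theorem in the hard range, and the mechanism you propose for it cannot work as stated. You invoke the fact that in a rank-two \emph{free} group every generating pair is Nielsen-equivalent to the standard one. But a discrete free group is purely loxodromic (a Schottky group), and for such groups no example with $J<4$ is known; in every known construction a group with $J$ near $1$ has torsion or parabolics (the paper's groups realizing $1\le r\le 4$ are quotients with relations $B^2=(A B A^{-1}B^{-1}A)^2=\cdots=id$). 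For a non-free group, generating pairs need not be Nielsen-equivalent and need not consist of images of primitives at all, so the Farey/Markoff bookkeeping does not enumerate them. This is precisely why the paper replaces your ``monotonicity under Nielsen moves'' by an ad hoc case analysis: a trace-gap lemma showing every loxodromic not conjugate into the distinguished dihedral subgroup has $\tr\in\pm[3,\infty)\cup\pm i[1,\infty)$, a parity homomorphism ruling out order-three elements as generators, and a surjection onto a finite Coxeter group ruling out the pairs $(C,D)$ with $C$ a proper power. None of these has an analogue in your sketch. (A further irony: in the genuinely free case your worry about the commutator term is vacuous, since $\tr[C,D]$ is the same for all generating pairs of a free group of rank two --- the paper's Lemma 3.4 --- so the free case is the easy one; it is the non-free case, which you cannot avoid if you want $J$ near $1$, where your argument has no traction.)

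Two smaller points. First, the paper sidesteps the intermediate value theorem entirely in the range $1\le r\le 4$: on its family the commutator trace is identically $1$, so $J(A_a,B)=|\tr^2A_a-4|+1=(3a^2-1)^2/(4a^2)$ is an explicit continuous surjection onto $[1,4]$, and the only issue is the lower bound for each fixed $a$. Second, you lean on Oichi--Sato for $r\ge 4$, but the paper points out that their proof was never written, and supplies one via kissing Schottky and $\theta$-Schottky groups, using the Bowditch/Tan--Wong--Zhang connectedness of $\Omega_\psi(k)$ to certify that the standard generators minimize $|\tr^2-4|$ over all primitives --- that is the rigorous form of your appeal to ``shortest translation lengths,'' and it is worth knowing that it only applies because those groups are free.
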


The J\o rgensen numbers can be studied by computer
experiments.
We will present a computer generated picture
of the estimates of J\o rgensen numbers from above
in the diagonal slice of Schottky space.
At the end of the paper,
we present some J\o rgensen groups,
and we conjecture that they are
counter examples of Li-Oichi-Sato's conjecture.

In section 2, we give a proof of Theorem \ref{thm:main} for the case $1 \leq r \leq 4$.
In section 3, we give a proof for the case $4 \leq r$.
In section 4, we present our computer generated picture,
and describe our calculation.

\section{The proof for the case $1 \leq r \leq 4$}

In this section, we give a proof for the case $1 \leq r \leq 4$.
For simplicity,
we use the same notation for a matrix in $\SL(2, \Cset)$
and its equivalence class in $\PSL(2, \Cset)$.

\subsection{The basic configuration}

Let $z, z'\in \Cset$,
and $[z, z']$ be the oriented line from $z$ to $z'$ in $\Hset^3$.
Following \cite{MR1004006}, we define the {\em line matrix} associated with $[z, z']$ as
\[
  M([z, z']) := \frac{i}{z'-z} \begin{pmatrix} z+z'  & -2zz' \\ 2 & -z-z' \end{pmatrix}.
\]
See \cite{MR1004006}, p. 64, equation (1).
It is the order two rotation about $[z, z']$.

For $a\geq 1$, set
\[
  P_a := M([a, -3a]), \quad Q := M([1, -1]), \quad R := M([0, \infty]).
\]
Let $G_a$ be a subgroup of $\PSL(2, \Cset)$ generated by $P_a, Q$ and $R$.
The axes of the generators are contained 
in the vertical plane $ \{(x,0,z) | z>0\} \cong \Hset^2$.
Since we assumed that $a\geq 1$, $[a, -3a]$ and $[1, -1]$ do not intersect in $\Hset^3$.
The angle between $[1,-1]$ and $[0, \infty]$ is $\pi/2$, and
the angle between $[0, \infty]$ and $[a, -3a]$ is $\pi/3$.
\begin{figure}
\centering
\includegraphics[height=4cm]{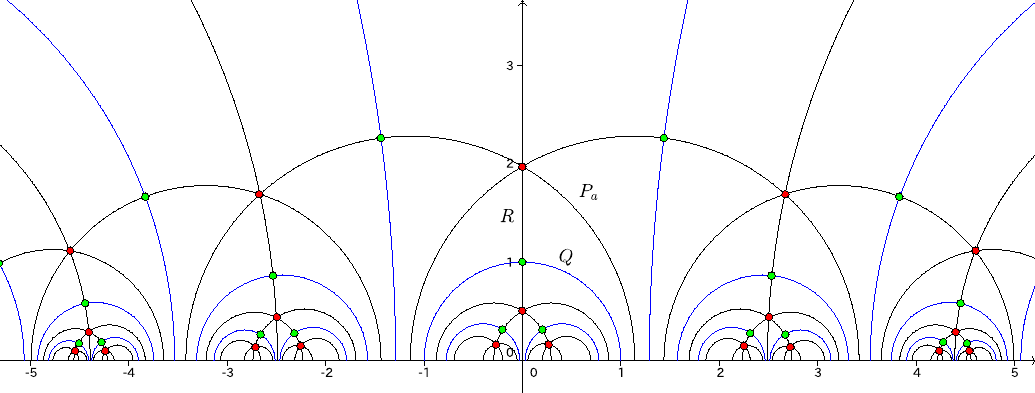}
\caption{The axis of the generators and their orbits.}
\label{fig:axis}
\end{figure}
See Figure~\ref{fig:axis}.
Hence, as an abstract group, $G_a$ has a presentation
\[
  \left\langle P_a, Q, R \:|\: P_a^2 = Q^2 = R^2 = (QR)^2 = (R P_a)^3 = id \right\rangle
\]
for any $a\geq 1$ and is isomorphic to a hyperbolic (full) triangle group in $\Hset^2$.
(Note that hyperbolic (full) triangle groups are 
defined as groups generated by reflections,
but $G_a$ is generated by rotations in $\Hset^3$.)
Hence, $G_a$ is non-elementary and discrete.
For later purpose, we classify non-trivial elements of $G_a$ into three types:
\begin{enumerate}
    \item[(i)] elliptic elements
    \item[(ii)] loxodromic elements which can be conjugated into 
    $\langle P_a, Q\rangle$
    \item[(iii)] loxodromic elements which can not be conjugated into
    $\langle P_a, Q\rangle$
\end{enumerate}
When $a=1$, we define that the parabolic elements are of type (ii).

\begin{remark}
Groups similar to
$\langle P_a, Q, R\rangle$ were studied 
by C.~Series, S.~P.~Tan and the first author
in \cite{STY}.
\end{remark}

\begin{lemma}\label{lem:trace}
Let $E_a$ be an element of type (iii) in $G_a$.
Then, we have
\[
    \tr E_a \in \{ \pm x \:|\: x\in \Rset, x \geq 3\} \cup 
    \{ \pm y i \:|\: y\in \Rset, y \geq 1 \}.
\]
\end{lemma}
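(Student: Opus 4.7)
My plan is to split the argument by parity and reduce everything to a trace bound in the orientation-preserving subgroup. First, each of $P_a, Q, R$ can be written as $i\cdot N$ with $N$ a real $2\times 2$ matrix of determinant $-1$. Consequently, a word of length $n$ in these generators has the form $i^n M$ with $M$ real and $\det M = (-1)^n$: for even $n$ it lies in $\SL(2,\Rset)$ with real trace, while for odd $n$ its trace is purely imaginary. This parity is well defined on $G_a$ because the abelianization of the triangle-group presentation identifies $P_a, Q, R$. Every element of $\langle P_a,Q\rangle$ has either real trace (its even-parity part, a power of $P_aQ$) or trace zero (odd-parity half-turns), so every odd-parity loxodromic element of $G_a$ is automatically of type (iii).

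Next, I would bound the real-trace case by working in $G_a^+ := G_a\cap\SL(2,\Rset)$. This is the triangle group $\Zset/2 * \Zset/3$ generated by $X:=QR$ and $Y:=RP_a$, with $\tr X = 0$, $\tr Y = -1$, and $z := \tr XY = (3a^2+1)/(2a)\geq 2$. Conjugation by $R$ swaps $XY$ and $XY^{-1}$ in $\PSL(2,\Cset)$, so the $G_a$-conjugates of powers of $XY$ exhaust the type (ii) class. A type (iii) element of $G_a^+$ is therefore $G_a^+$-conjugate to a cyclically reduced word in $X, Y^{\pm 1}$ whose $Y$-exponent sign pattern is not constant; the shortest such is $[X,Y]=XYXY^{-1}$, and the Fricke identity gives
\[
\tr[X,Y] = (\tr X)^2 + (\tr Y)^2 + (\tr XY)^2 - \tr X\,\tr Y\,\tr XY - 2 = z^2 - 1 \geq 3.
\]
A Chebyshev-type induction on cyclic word length then shows that all longer non-constant sign patterns produce strictly larger $|\tr|$, yielding $|\tr E_a| \geq z^2-1 \geq 3$.

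Finally, for an odd-parity loxodromic $w$, I claim $w^2 \in G_a^+$ is again of type (iii): the axes of $w$ and $w^2$ coincide, and the full $G_a$-stabilizer of $\mathrm{axis}(P_aQ)$ is the infinite dihedral group $\langle P_a,Q\rangle$ (generated by the two half-turns whose axes are perpendicular to $\mathrm{axis}(P_aQ)$), so $w^2\sim(P_aQ)^n$ would conjugate $w$ into $\langle P_a,Q\rangle$, contradicting type (iii). Hence $|\tr w^2|\geq 3$, and combining $\tr w^2 = (\tr w)^2 - 2$ with $(\tr w)^2 \leq 0$ (purely imaginary) forces $\tr w^2 \leq -3$, so $|\tr w|^2 \geq 1$ and $|\tr w|\geq 1$. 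The main obstacle is the Chebyshev-type induction in the real-trace case; the squaring reduction for the imaginary case is then a clean consequence.
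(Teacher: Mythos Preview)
Your parity decomposition is correct and elegant: writing each generator as $i$ times a real matrix of determinant $-1$ cleanly separates the real-trace (even-length) and imaginary-trace (odd-length) cases, and the squaring reduction of the odd case to the even case is sound (modulo the routine check that the $G_a$-stabilizer of $\mathrm{axis}(P_aQ)$ is exactly $\langle P_a,Q\rangle$, which holds because this is the peripheral subgroup of the orbifold).

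However, the even-parity step has a genuine gap. You correctly identify $[X,Y]$ as the shortest cyclically reduced word with non-constant $Y$-sign pattern and compute its trace $z^2-1$, but the ``Chebyshev-type induction on cyclic word length'' asserting that all longer mixed-sign words have strictly larger $|\tr|$ is not carried out, and it is not a routine trace-identity exercise. The traces of words $XY^{e_1}\cdots XY^{e_k}$ are polynomials in $z$ whose behavior depends delicately on the sign pattern $(e_1,\ldots,e_k)$; showing that the minimum over all mixed patterns equals $z^2-1$ for every $z\ge 2$ is essentially a systole estimate for the $(2,3)$-orbifold with variable boundary length, and needs a real argument. You yourself flag this as the main obstacle, and it is.

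The paper takes a completely different route that sidesteps this difficulty. It first conjugates $G_1$ into the Picard group, so every trace lies in $\Zset\cup i\Zset$; loxodromicity then forces $|\tr E_1|\ge 3$ (real case) or $|\tr E_1|\ge 1$ (imaginary case) by integrality alone. For $a>1$ it uses the geometric fact that the fundamental triangle $F_a\subset\Hset^2$ is monotonically increasing in $a$, hence so is the translation length $l_a$ of any fixed word; since $|\tr E_a|$ is monotone in $l_a$, the bound $|\tr E_a|\ge|\tr E_1|$ follows immediately. This monotonicity-plus-integrality argument replaces your entire combinatorial induction with a one-line geometric comparison. Your approach, if the induction could be completed, would yield the sharper bound $|\tr|\ge z^2-1$ in the real case, but at considerably greater cost.
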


\begin{proof}
We begin by considering the case $a=1$.
Set
\[
    M = \begin{pmatrix} \frac{1}{\sqrt{2}} & \frac{1}{\sqrt{2}} \\
                        0 & \sqrt{2}
                        \end{pmatrix}.
\]
Then, we have
\[
    M P_1 M^{-1} = \begin{pmatrix}  0 & -i\\  -i &  0\end{pmatrix},
    M Q   M^{-1} = \begin{pmatrix} -i &  0\\ -2i &  i\end{pmatrix},
    M R   M^{-1} = \begin{pmatrix}  i & -i\\   0 & -i\end{pmatrix}.
\]
Hence, 
for any element $N\in G_1$,
$\tr N = \pm n$ or $\pm n \: i$ for some integer $n$.
Let $E_1$ be an element of type (iii) in $G_1$.
Since $E_1$ is loxodromic, 
\begin{equation}\label{eqn:e1}
\tr E_1 \in 
\{\pm 3, \pm 4, \pm 5, \ldots \} \cup \{ \pm i, \pm 2 i, \pm 3 i\ldots \}.
\end{equation}

Now, we consider the general case.
For $a > 1$, 
let $E_a$ be a type (iii) element of $G_a$.
Since $E_a$ is loxodromic and preserves the 
vertical plane $\{(x,0,z) | x>0\} \cong \Hset^2$,
$E_a$ or $E_a^{-1}$ is conjugate to 
\[
    \begin{pmatrix} \sqrt{l_a}  & 0 \\ 0 & 1/\sqrt{l_a}  \end{pmatrix} \text{ or }
    \begin{pmatrix} \sqrt{l_a}i & 0 \\ 0 & 1/\sqrt{l_a}i \end{pmatrix},
\]
where $l_a$ is the translation length of $E_a$.
Thus, we have
\[
    \tr E_a = \pm (\sqrt{l_a} + 1/\sqrt{l_a}) \text{ or } 
    \pm (\sqrt{l_a} - 1/\sqrt{l_a})i.
\]
Note that the region $F_a$ in the vertical plane $\Hset^2$
bounded by the axes of $P_a$, $Q$, $R$ is 
a fundamental region for $G_a$ in $\Hset^2$.
Since $F_a$ is monotonically increasing in $a$,
$l_a$ is also monotonically increasing in $a$.
Combining (\ref{eqn:e1}), we see that
\[
    \tr E_a \in \{ \pm x \:|\: x\in \Rset, x \geq 3\} \cup 
    \{ \pm y i \:|\: y\in \Rset, y \geq 1 \},
\]
and the lemma is proved.
\end{proof}

\begin{remark}
By this proof, we see that
$G_1$ is a conjugate of a subgroup of Picard group.
Sato
\cite{MR2199365}
showed that Picard group is a Jorgensen group.
Gonz{\'a}lez-Acu{\~n}a and Ram{\'{\i}}rez
\cite{MR2351012}
described all J\o rgensen subgroups of Picard group.
\end{remark}

\subsection{The singular solid torus}\label{subsec:tsst}

We define
\[
A_a := P_a Q = \begin{pmatrix} -3a/2 & 1/2\\ -1/2 & -1/2a \end{pmatrix},  \quad 
B  := R = \begin{pmatrix} i & 0 \\ 0 & -i \end{pmatrix}.
\]
Since $Q = A_a B A_a^{-1} B^{-1} A_a B$, 
the rank of $G_a$ is two and
$(A_a, B)$ is a generating pair of $G_a$.
As an abstract group, 
by Tietze transformation, we have
\begin{align}
  G_a & = \langle P_a, Q, B, A_a \: | \: P_a^2 = Q^2 = B^2 = (QB)^2 = (B P_a)^3 = id, A_a = P_a Q \rangle \nonumber \\ 
  & = \left\langle A_a, B \: \Big| \: 
  \begin{array}{l} 
    (A_a B A_a^{-1} B A_a B A_a^{-1})^2 = (A_a B A_a^{-1} B^{-1} A_a B)^2 = B^2 \nonumber \\ 
    = (A_a B A_a^{-1} B^{-1} A_a B \cdot B)^2 
    = (B \cdot A_a B A_a^{-1} B A_a B A_a^{-1})^3 = id
  \end{array}
  \right\rangle \\
  & =  \langle A_a, B \: | \:  (A_a B A_a^{-1} B^{-1} A_a B)^2 = B^2
  = (A_a B A_a^{-1} B^{-1} A_a)^2 =id \rangle. \label{eqn:group}
\end{align}

\begin{remark}
Generating pairs similar to $A_a$ and $B$ were studied in \cite{STY},
and called the {\it singular solid torus}.
\end{remark}

\begin{lemma}\label{lem:orderthree}
In $G_a$, any element of order three 
can not be a part of minimal generating system.
\end{lemma}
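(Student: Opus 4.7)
The plan is to reduce the question to the abelianization of $G_a$, where every element of order three becomes trivial, and so cannot meaningfully contribute to a generating pair. Since $G_a$ is non-elementary (hence non-cyclic, and in fact non-abelian) but is generated by the two elements $A_a, B$ from the preceding subsection, any minimal generating system of $G_a$ has exactly two elements. Thus it suffices to prove that no generating pair $(X, Y)$ of $G_a$ has $X$ of order three.

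First I would compute $G_a^{\mathrm{ab}}$ directly from the triangle-group presentation
\[
\langle P_a, Q, R \mid P_a^2 = Q^2 = R^2 = (QR)^2 = (RP_a)^3 = id\rangle
\]
established earlier. Writing everything additively, the relations $2P_a = 2Q = 2R = 0$ make $(QR)^2$ automatic, while $(RP_a)^3$ becomes $3P_a + 3R = 0$, which modulo $2$ collapses to $P_a + R = 0$. Hence
\[
G_a^{\mathrm{ab}} \cong \Zset/2 \oplus \Zset/2,
\]
generated by the images of $Q$ and $R$.

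Next I would note that any element of order three in $G_a$ maps to the identity in $G_a^{\mathrm{ab}}$, because the exponent of $(\Zset/2)^2$ is $2$ and $\gcd(3,2)=1$. Therefore, if a generating pair $(X, Y)$ had $X$ of order three, then the image of $Y$ alone would have to generate $G_a^{\mathrm{ab}} \cong (\Zset/2)^2$. But $(\Zset/2)^2$ is not cyclic, a contradiction.

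The only step requiring genuine attention is the abelianization computation, and even that is short once the triangle-group presentation is in place; the crucial observation is that the involution relations $P_a^2 = R^2 = id$ collapse the apparent $\Zset/3$ factor coming from $(RP_a)^3$. No further geometric input beyond what has already been established is needed.
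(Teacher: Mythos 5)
Your proof is correct and takes essentially the same route as the paper: both pass to the quotient $G_a^{\mathrm{ab}}\cong(\Zset/2)^2$ (the paper realizes it as a map $\tau$ on the generators $A_a,B$ and checks the relators die, while you compute the abelianization from the triangle presentation), observe that an order-three element becomes trivial there, and conclude from the non-cyclicity of $(\Zset/2)^2$.
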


\begin{proof}
Let $\tau$ be the natural projection from the free group 
$F_2 = \langle A_a, B\rangle$ of rank two to
$\langle A_a\: | \: A_a^2\rangle \oplus \langle B\:|\: B^2\rangle \cong 
\{(i, j)\:|\: i, j \in \{0, 1\}\}$.
Since
\[
    \tau( (A_a B A_a^{-1} B^{-1} A_a B)^2 ) = 
    \tau( B^2 ) = 
    \tau( (A_a B A_a^{-1} B^{-1} A_a)^2 ) = (0, 0),
\]
$\tau$ is well-defined on $G_a$.
% Note that, for any $X, Y\in G_a$, 
% we have $\tau(X Y) \equiv \tau(X)+\tau(Y) \pmod{2}$.
We denote the set $\{ X\in G_a\:|\: \tau(X)=(i, j)\}$ by $G_{(i,j)}$.
$G_{(i,j)}$ is not empty set for $(i,j) = (0,0), (0,1), (1,0), (1,1)$.

Let $X$ be an element of $G$ such that $X^3=id$.
Then, $\tau(X)=(0, 0)$.
(Otherwise, we have 
$\tau(X^3) \equiv \tau(X^2) + \tau(X) \equiv \tau(X) 
\not\equiv (0, 0) = \tau(id)$ (mod 2),
which is a contradiction.)
Hence, for any $Y\in G$, 
we have
\[
    \langle X, Y\rangle \subset G_{(0,0)}\cup G_{\tau(Y)} \neq 
    G_{(0,0)} \cup G_{(1,0)} \cup G_{(0,1)} \cup G_{(1,1)} =
    G
\]
and the lemma is proved.
\end{proof}

\subsection{The J\o rgensen number}\label{subsec:JN}
Finally, we calculate the J\o rgensen number. 

%Note that
%\begin{align*}
%J(A_a, B) & = |\tr^2A_a - 4| + |\tr [A_a, B] - 2| = \frac{(3a^2-1)^2}{4a^2} %\\
%J(B, A_a) & = |\tr^2 B - 4| +  |\tr [B, A_a] - 2| = 5
%\end{align*}

%Suppose that $1<a<a_0=(\sqrt{7}+2)/3=1.54\cdots$, 
%where $a_0$ is a root of $(3a_0^2-1)^2/4a_0^2 = 4$.
%Then, we have $1 < J(A_a, B) < 4$.

\begin{proposition}
For any $1 \leq a \leq a_0$, where $a_0 = (\sqrt{7}+2)/3$, we have
\[
    J(G_a) = J(A_a, B) = \frac{(3a^2 - 1)^2}{4a^2}.
\]
In particular, $J(G_1) = 1$ and $J(G_{a_0}) = 4$.
For any $1\leq r \leq 4$, the J\o rgensen number is realized.
\end{proposition}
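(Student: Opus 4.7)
The strategy is to bound $J(G_a)$ both from above and from below by $(3a^2-1)^2/(4a^2)$. The upper bound comes from evaluating $J(A_a, B)$ directly. One computes $\tr A_a = -(3a^2+1)/(2a)$, $\tr B = 0$, and $\tr(A_a B) = -i(3a^2-1)/(2a)$. Since $\tr B = 0$, the standard trace identity
\[
\tr[X,Y] - 2 = \tr^2 X + \tr^2 Y + \tr^2(XY) - \tr X \, \tr Y \, \tr(XY) - 4
\]
collapses to $\tr[A_a, B] - 2 = \tr^2 A_a + \tr^2(A_a B) - 4 = -1$, and combining with $|\tr^2 A_a - 4| = (9a^2-1)(a^2-1)/(4a^2)$ yields $J(A_a, B) = (3a^2-1)^2/(4a^2)$. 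An elementary algebraic check then shows $J(A_a, B) \leq 4$ precisely when $a \leq a_0$.

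For the matching lower bound, let $(X, Y)$ be any generating pair of $G_a$. Since $G_a$ has rank two, $(X, Y)$ is automatically a minimal generating system, so by Lemma~\ref{lem:orderthree} neither $X$ nor $Y$ has order $3$; the triangle-group structure of $G_a$ forces all other elliptic elements to have order $2$. Classify $X$ by its type: if $X$ has order $2$ then $|\tr^2 X - 4| = 4 \geq J(A_a, B)$ (using $a \leq a_0$); if $X$ is loxodromic of type (iii), Lemma~\ref{lem:trace} gives $|\tr^2 X - 4| \geq 5$; in either subcase $J(X, Y) \geq J(A_a, B)$ with no further work. The remaining case is $X$ loxodromic (or parabolic, when $a=1$) of type (ii). Using $J(X^{-1}, Y) = J(X, Y)$ together with the conjugation invariance of traces, I may assume $X = A_a^n$ for some $n \geq 1$; with $\lambda$ an eigenvalue of $A_a$ having $|\lambda| \geq 1$, one has $|\tr^2 A_a^n - 4| = (|\lambda|^n - |\lambda|^{-n})^2$, a non-decreasing function of $n$ equal to $J(A_a, B) - 1$ at $n = 1$.

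The hard part, and the main obstacle, is to supply a supplementary lower bound $|\tr[A_a^n, Y] - 2| \geq J(A_a, B) - |\tr^2 A_a^n - 4|$ in this last case. The critical instance is $n = 1$, which demands $|\tr[A_a, Y] - 2| \geq 1$; J\o rgensen's inequality alone only supplies $|\tr[A_a, Y] - 2| \geq 1 - |\tr^2 A_a - 4|$, strictly weaker for $a > 1$. I would attack this by classifying $Y$ in turn by its type via Lemmas~\ref{lem:trace} and \ref{lem:orderthree}, exploiting the generating condition $\langle A_a, Y\rangle = G_a$ to confine $\tr Y$ and $\tr(A_a Y)$ to discrete sets of admissible values, and substituting these back into the trace identity above for $\tr[A_a, Y] - 2$ to verify the $\geq 1$ bound by a finite case check. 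For $n \geq 2$ the same analysis applies, with the larger value of $|\tr^2 A_a^n - 4|$ absorbing any slack in the commutator bound.
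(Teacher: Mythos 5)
Your upper-bound computation and the easy cases (order two, order three via Lemma~\ref{lem:orderthree}, type (iii) via Lemma~\ref{lem:trace}) agree with the paper, but there are two genuine gaps in your treatment of the type (ii) case, and the second is exactly where the substance of the proof lies. First, for $C$ conjugate to $A_a^n$ with $|n|\geq 2$, the claim that the larger value of $|\tr^2 A_a^n-4|$ ``absorbs any slack'' is false near $a=1$: there $A_a$ is nearly parabolic, so $|\tr^2 A_a^n-4|=(|\lambda|^n-|\lambda|^{-n})^2\to 0$ for every fixed $n$, while $J(A_a,B)\to 1$. Concretely, $|\tr^2 A_a^2-4|=\tr^2 A_a\cdot\bigl(J(A_a,B)-1\bigr)$, which is \emph{smaller} than $J(A_a,B)$ whenever $J(A_a,B)<\tr^2 A_a/(\tr^2 A_a-1)$, i.e.\ for all $a$ sufficiently close to $1$. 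The paper disposes of this case by a different and complete argument: such a $C$ can never be half of a generating pair at all, because the quotient map onto the Coxeter group $H_n=\langle P,Q,R\mid P^2=Q^2=R^2=(PR)^3=(QR)^2=(PQ)^n=id\rangle$ kills $C$, so $\langle C,D\rangle$ has cyclic, hence proper, image.

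Second, the bound you yourself flag as the main obstacle, $|\tr[C,D]-2|\geq 1$ when $\tr^2 C=\tr^2 A_a$, is not closed by your sketch. Classifying $Y$ and confining $\tr Y$ and $\tr(A_a Y)$ to ``discrete sets of admissible values'' cannot work as stated: type (ii) elements have traces $\pm(\sqrt{l_a}+1/\sqrt{l_a})$ that vary continuously with $a$ and, unlike type (iii) traces, are not bounded away from $\pm 2$, so there is no finite case check available. The paper instead classifies the \emph{commutator} $[C,D]$ by type. If $[C,D]$ is elliptic, its trace is $0$ or $\pm 1$ and $|\tr[C,D]-2|\geq 1$ directly; if $[C,D]$ is of type (iii), Lemma~\ref{lem:trace} gives $|\tr[C,D]-2|\geq 1$. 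The delicate case is $[C,D]$ of type (ii), conjugate to some $A_a^m$; there the paper runs a deformation argument: follow the same words $C_b, D_b$ in $G_b$ as $b\to 1$. If $\tr[C_b,D_b]\to 2$, then $J(C_b,D_b)\to 0$, contradicting J\o rgensen's inequality for the discrete non-elementary groups $G_b$; otherwise $\tr[C,D]\leq -2$ and $|\tr[C,D]-2|\geq 4$. This continuity-plus-J\o rgensen argument is the missing idea; without it, or a genuine substitute, your proof does not close.
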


\begin{proof}
Since
$\tr^2 A = (3a^2+1)^2/4a^2$ and
$\tr[A_a, B] = 1$, for $a\geq 1$, we have
\[
    J(A_a, B) = 
%    |\tr^2 A_a - 4| + |\tr[A_a, B] -2| = 
    \left| \frac{(3a^2+1)^2}{4a^2} - 4\right| + 1 =
    \frac{(3a^2-1)^2}{4a^2}.
\]
Since $a_0$ satisfies the equation $(3a_0^2-1)^2/4a_0^2 = 4$,
if $1 \leq a \leq a_0$, we have $1 \leq J(A_a, B) \leq 4$.
%We want to show that $J(A_a, B) \leq J(C, D)$ for any 
%$C, D\in G_a$ with $G_a=\langle C, D\rangle$.
To prove this proposition by contradiction, 
suppose that there exist $C, D\in G_a$ such that $J(C, D) < J(A_a, B)$
and $G_a=\langle C, D\rangle$. Since $G_a$ is non-elementary, $|\tr[C,D]-2|>0$. (See p.68 in \cite{MR698777}.) 

\noindent{\bf Case 1}
$C$ is of type (i).
If $C$ is an elliptic element of order two,
then $\tr C = 0$ and 
$J(C,D) = |0^2 - 4| + |\tr[C,D]-2| > 4 \geq J(A_a, B)$,
and this can not happen.
If $C$ is an elliptic element of order three,
since $C$ can not be a part of minimal generating system
by Lemma \ref{lem:orderthree},
this can not happen.

\noindent{\bf Case 2}
$C$ is of type (ii).
Let $C'$ be an element of $\langle P_a, Q\rangle$
which is conjugate to $C$.
Since $\langle P_a, Q\rangle$ is isomorphic to the infinite dihedral group,
$C'$ can be written as 
$C' = P_a Q P_a Q \cdots$ or 
$C' = Q P_a Q P_a \cdots$.
If the word length is odd,
then $C'$ is conjugate to $P_a$ or $Q$ and $C'$ is not loxodromic.
Thus, the word length is even,
and $C' = (P_a Q)^n$ or $C' = (Q P_a)^{-n} = (P_a Q)^n$ 
for some $n \in \Zset$.

\noindent{\bf Case 2-1} 
$|n|>1$.  
We claim that $G_a \neq \langle C, D \rangle$.
To see this, 
consider the  Coxeter group
 \[
  H_n := \langle P, Q, R \:|\: 
  P^2 = Q^2 = R^2 = (PR)^3 = (QR)^2 = (PQ)^n = id \rangle
\]
and let $\pi: G_a \to H_n$ be the natural surjection.
Since $\pi(C)=id$, we have 
\[
\pi(\langle C, D\rangle) = \langle \pi(D) \rangle \neq H_n,
\]
because $H_n$ is not cyclic when $|n|>1$.
It follows that $G_a \neq \langle C, D\rangle$ and the claim is proved.

\noindent{\bf Case 2-2}
$|n| = 1$.  In this case, $\tr^2 C = \tr^2 A_a$.

\noindent{\bf Case 2-2-1} $[C, D]$ is of type (i).
If $[C, D]$ is elliptic of order two,
then 
\[
    J(C,D) = 
%    |\tr^2 C - 4| + |\tr[C,D] - 2| = 
    |\tr^2 A_a - 4| + |0 - 2| 
    > |\tr^2 A_a - 4| + |1 - 2| 
    = J(A_a, B).
\]
If $[C, D]$ is elliptic of order three,
then $\tr[C, D] = \pm 1$, and
\[
    J(C,D) = |\tr^2 C - 4| + |\tr[C,D] - 2| = |\tr^2 A_a - 4| + |\pm 1 - 2| \geq J(A_a, B).
\]

\noindent{\bf Case 2-2-2} $[C, D]$ is of type (ii).
Let $E_a'$ be an element of $\langle P_a, Q\rangle$
which is conjugate to $[C, D]$.
Then, $E_a' = A_a^m$ for some $m \neq 0$.
(Recall the first paragraph of Case 2.)
We have $\lim_{a\to 1} \tr E_a' = \lim_{a\to a} \tr A_a^m = 2$ or $-2$.
For $1 \leq b \leq a$, 
we denote the elements in $G_b$
which corresponds to $C$ and $D$
by $C_b$ and $D_b$.

If $\lim_{a\to 1} \tr E_a' = 2$, then
$J(C_b, D_b)$ is close to $0$ when $b$ is close to $1$,
which contradicts 
J\o rgensen's inequality,
because $G_b$ is non-elementary and discrete.

If $\lim_{a\to 1} \tr E_a' = -2$, then $\tr [C, D] \leq -2$ and 
\[
    J(C,D) 
%    = |\tr^2 C - 4| + |\tr [C,D] - 2| 
    \geq |\tr^2 A - 4| + | -2 -2| > J(A_a, B).
\]

\noindent{\bf Case 2-2-3} $[C, D]$ is of type (iii).
By Lemma~\ref{lem:trace}, we have
\[
    J(C, D) = |\tr^2 C - 4| + |\tr [C, D] - 2| \geq 
    |\tr^2 A_a - 4 | + 1 = J(A_a, B).
\]

\noindent{\bf Case 3}
$C$ is of type (iii).
By lemma~\ref{lem:trace}, we have
\[
    J(C, D) = |\tr^2 C -4| + |\tr[C, D] - 2|
    \geq |\tr^2 C -4| \geq 5 \geq J(A_a, B).
\]

Hence, $J(G_a) = J(A_a, B)$ for $1\leq a \leq a_0$,
and the proposition is proved.
\end{proof}

\section{The proof for the case $r\geq 4$}\label{sec:Schottky}

Oichi and Sato \cite{OS} claimed that, 
for every real number $r \geq 4$,
there is a subgroup $G$ of $\PSL(2, \Cset)$
such that $J(G)=r$.
But the proof was not written.
For completeness,
we give a proof of this fact
by calculating the J\o rgensen number 
of some Kleinian groups.

\subsection{Markoff maps}

First, we recall Bowditch, Tan-Wong-Zhang theory
\cite{MR1643429, MR2370281}
on Markoff maps very briefly.
See section 3 in \cite{MR2370281} for detail.

We denote the set $\Qset \cup \{1/0\}$ by $\widehat\Qset$.
Let $\mathcal F$ be the Farey triangulation of 
the upper half plane $\Hset^2$.
Recall that the vertex set of $\mathcal F$ is 
$\widehat\Qset$, 
and two vertices $p/q$ and $r/s$ are connected by a geodesic
in $\Hset^2$ if $ps-qr=\pm 1$.
Let $\Sigma$ be the binary tree dual to $\mathcal F$.
See Figure \ref{fig:Farey}.
\begin{figure}
\centering
\includegraphics[height=5cm]{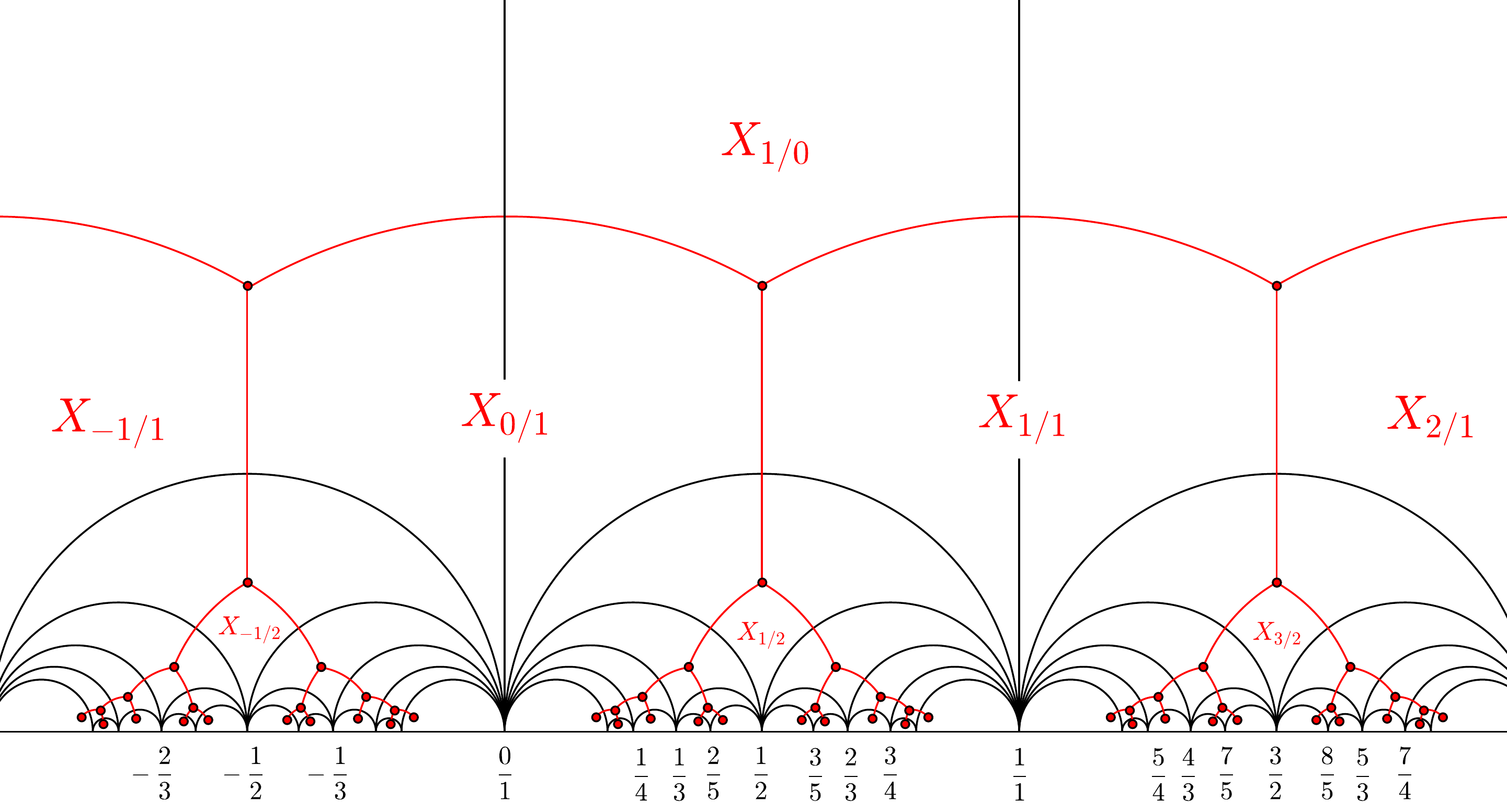}
\caption{Farey triangulation and the dual graph $\Sigma$ (red).}
\label{fig:Farey}
\end{figure}
A {\it complementary region} of $\Sigma$ is the closure of 
a connected component of the complement of $\Sigma$.
The set of complementary regions of $\Sigma$ is denoted by $\Omega$.
Since each complementary region corresponds 
to a vertex in $\mathcal F$,
we can identify $\Omega$ with $\widehat\Qset$,
and we denote the complementary region which corresponds to
$p/q\in\Qset$ by $X_{p/q}$.
Let $e$ be an edge of $\Sigma$ with end points $u$ and $v$.
Then, there exist $X, Y, U, V\in\Omega$ such that
$e = X\cap Y$, $u = X\cap Y\cap U$ and $v = X\cap Y\cap V$.
See Figure ~\ref{fig:edge}.
\begin{figure}
\centering
\includegraphics[height=1.5cm]{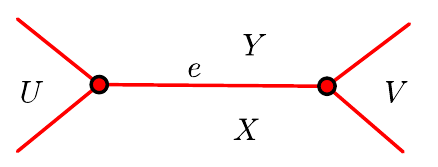}
\caption{$e = (X, Y; U, V)$}
\label{fig:edge}
\end{figure}
We write $e = (X, Y; U, V)$ to indicate these regions.
A {\it Markoff map} is a map $\psi$ from $\Omega$ to $\Cset$
such that, for every edge  $e=(X, Y; U, V)$ in $\Sigma$,
we have
\[
    \psi(U) + \psi(V) = \psi(X)\psi(Y).
\]
This condition is called the {\it edge relation}.
Given $k\geq 0$, the set $\Omega_\psi(k)$ is defined by
$\Omega_\psi(k) = \{ X\in\Omega \:|\: |\psi(X)| \leq k \}$.
We will need the following lemma.
\begin{lemma}[Theorem 3.1 (2), \cite{MR2370281}]\label{lem:connected}
Let $\psi$ be a Markoff map.
For any $k\geq 2$, 
the union $\cup_{X\in\Omega_\psi(k)} X$
is connected as a subset of $\Hset^2$.
\end{lemma}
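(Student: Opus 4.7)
The plan is to follow the Bowditch / Tan--Wong--Zhang strategy based on a Fibonacci-type growth of Markoff maps away from a core. The key mechanism is the edge relation itself: at an edge $e=(X,Y;U,V)$, we have $\psi(U)+\psi(V)=\psi(X)\psi(Y)$, so if $|\psi(X)|$ and $|\psi(Y)|$ are both at least $2$ and their product is large, at least one of $|\psi(U)|,|\psi(V)|$ is forced to be even larger.

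First I would make this quantitative: define, at each edge $e=(X,Y;U,V)$ of $\Sigma$, an orientation pointing from the region of larger $|\psi|$ among $\{U,V\}$ to the smaller one, provided that $|\psi(X)|,|\psi(Y)|\geq 2$. Then I would prove a \emph{monotonicity lemma}: if one follows a path in $\Sigma$ consistently against this orientation while remaining outside $\Omega_\psi(2)$, the sequence $|\psi(\cdot)|$ along the adjacent regions grows at least as fast as the Fibonacci sequence, and in particular diverges. This implies that from every region $X$ the induced ``gradient flow'' toward smaller $|\psi|$ terminates after finitely many steps in $\Omega_\psi(2)$.

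Next I would establish connectedness of the core. For $k=2$, the finitely many regions in $\Omega_\psi(2)$ (which is nonempty, since the flow lands in it) form a subtree of the dual graph: indeed if two regions of $\Omega_\psi(2)$ were separated by a region $Y$ with $|\psi(Y)|>2$, the edge relation on both sides of $Y$ together with the monotonicity lemma would force $|\psi|$ to grow indefinitely in both outward directions, but the two ``small'' regions stop this growth on both sides, giving a contradiction. So $\bigcup_{X\in\Omega_\psi(2)}X$ is connected in $\Hset^2$.

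Finally, for arbitrary $k\geq 2$ and any $X\in\Omega_\psi(k)$, I would use the gradient flow from $X$: at each step one moves to an adjacent region whose $|\psi|$-value is strictly smaller, so the entire orbit lies in $\Omega_\psi(k)$ and eventually enters the connected core $\Omega_\psi(2)$. Concatenating such flow lines produces, for any two regions in $\Omega_\psi(k)$, a connected chain of adjacent regions inside $\Omega_\psi(k)$ through the core, which yields connectedness of $\bigcup_{X\in\Omega_\psi(k)}X$ as a subset of $\Hset^2$. The main obstacle is the Fibonacci monotonicity lemma in the first step: the edge relation alone does not immediately give growth, and one must carefully rule out cancellation in $\psi(U)+\psi(V)=\psi(X)\psi(Y)$, which is exactly where the hypothesis $k\geq 2$ is used to guarantee $|\psi(X)\psi(Y)|\geq 4$ and hence control.
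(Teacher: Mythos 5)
First, a remark on the comparison itself: the paper does not prove this lemma at all --- it is quoted from Tan--Wong--Zhang \cite{MR2370281} (Theorem 3.1(2); the result goes back to Bowditch \cite{MR1643429}) --- so there is no internal proof to measure your attempt against, and it must be judged as a reconstruction of their argument. Judged that way, it has a genuine gap. Your entire architecture routes every region of $\Omega_\psi(k)$ by a descending ``gradient flow'' into a nonempty, finite, connected core $\Omega_\psi(2)$. But $\Omega_\psi(2)$ can be empty, and the descending flow can terminate at a region that is a local minimum of $|\psi|$ among its neighbours without lying in $\Omega_\psi(2)$. This is not a pathological worry: it occurs precisely where the present paper applies the lemma. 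For the kissing Schottky maps, Lemma~\ref{lem:min} shows $\Omega_\psi(2x)=\{X_{0/1},X_{1/0}\}$ with $2x=2(k+1)/\sqrt{k^2+1}>2$, so $\Omega_\psi(2)=\emptyset$ while $\Omega_\psi(2x)$ consists of two adjacent regions on which $|\psi|$ is constant and greater than $2$; your flow has nowhere to descend and your core is empty. (The classical Markoff map generated by the triple $(3,3,3)$ is another example.) Separately, $\Omega_\psi(2)$ need not be finite (consider $\psi\equiv 0$ or $\psi\equiv 2$), and Fibonacci-type growth requires values bounded away from $2$, not merely $\geq 2$.

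The mechanism that actually proves the statement is more local and does not pass through any core. If $Z$ is a region with $|\psi(Z)|>k\geq 2$ and $(Y_j)_{j\in\Zset}$ is the bi-infinite sequence of regions adjacent to $Z$, the edge relation and the triangle inequality give $|\psi(Y_{j+1})|+|\psi(Y_{j-1})|\geq |\psi(Z)|\,|\psi(Y_j)|>2|\psi(Y_j)|$, so $j\mapsto|\psi(Y_j)|$ is convex (strictly, away from zeros) and attains its maximum on any finite interval only at the endpoints. Consequently, any chain of adjacent regions joining two elements of $\Omega_\psi(k)$ and passing through such a $Z$ can be rerouted around $Z$ through regions whose $|\psi|$-values do not exceed those of the two neighbours of $Z$ on the chain; a suitable induction (on the maximal value along the chain and the number of regions attaining it) then eliminates every region with $|\psi|>k$. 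This is exactly where the hypothesis $k\geq 2$ enters --- to make the neighbour sequence convex --- rather than to force exponential growth, which is the tool for other parts of the Bowditch/Tan--Wong--Zhang theory (e.g.\ finiteness of $\Omega_\psi(k)$ under the BQ-condition). Your proposal does not contain this convexity/rerouting step, and without it the argument does not close.
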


Let $F_2 = \langle A, B\rangle$ be the free group on $A$ and $B$.
An element $W$ of $F_2$ is called {\it primitive}
if there exists an element $V$ such that 
$F_2 = \langle W, V\rangle$,
and $W$ and $V$ 
are called {\it associated primitives}.
Let $\alpha$ be the abelianization homomorphism from
$F_2$ onto $F_2/F_2'$ ($\cong \Zset^2$).
Let $\rho$ be an $\SL(2, \Cset)$ representation of $F_2$.
Then, the next map $\psi_\rho : \Omega \to \Cset$
is well-defined:
\[
    \psi_\rho(X_{n/m}) = \tr\rho(W_{n/m}),
\]
where $W_{n/m}$ is a primitive element such that 
$\alpha(W_{n/m}) = A^m B^n$.
See Corollary 3.2, \cite{MR608526}.
For example, we have
\[
    \psi_\rho(X_{0/1}) = \tr_\rho(A), \quad
    \psi_\rho(X_{1/0}) = \tr_\rho(B), \quad
    \psi_\rho(X_{1/1}) = \tr_\rho(AB).
\]
By Theorem 1.2 and 1.3 in \cite{MR608526} 
and the trace identity in $\SL(2, \Cset)$
\begin{equation} \label{eqn:trace}
    \tr AB + \tr AB^{-1} = \tr A \tr B,
\end{equation}
we have the next lemma.
(See Section 3, 
Natural correspondence ${\mathcal X}_{\mu-2} = \psi_\mu$ 
\cite{MR2370281}.)
\begin{lemma}\label{lem:markoff}
$\psi_\rho$ is a Markoff map.
\end{lemma}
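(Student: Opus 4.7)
The plan is to verify the edge relation directly from the trace identity \eqref{eqn:trace} combined with the characterization of primitive elements in $F_2$ established in \cite{MR608526}.

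First I would decode the combinatorics of an edge $e = (X, Y; U, V)$ of $\Sigma$. Writing $X = X_{n_1/m_1}$ and $Y = X_{n_2/m_2}$, the fact that $X$ and $Y$ are separated by the single edge $e$ means their associated vertices in the Farey triangulation $\mathcal F$ are joined, so $n_1 m_2 - n_2 m_1 = \pm 1$. The two regions $U$ and $V$ correspond to the third vertices of the two Farey triangles sharing this edge, which by the Farey mediant rule have slopes $(n_1 + n_2)/(m_1 + m_2)$ and $(n_1 - n_2)/(m_1 - m_2)$.

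Next I would invoke the theorems of Horowitz cited in the excerpt: primitive elements of $F_2$ are classified (up to conjugacy and inversion) by their abelianization class in $\widehat\Qset$, and two primitive elements $W_{p/q}$, $W_{r/s}$ form a generating pair of $F_2$ precisely when $ps - qr = \pm 1$. In particular $W_{n_1/m_1}$ and $W_{n_2/m_2}$ are associated primitives, so $W_{n_1/m_1}W_{n_2/m_2}$ and $W_{n_1/m_1}W_{n_2/m_2}^{-1}$ are again primitive. Taking abelianizations, these two products lie in the conjugacy classes of $W_{(n_1+n_2)/(m_1+m_2)}$ and $W_{(n_1-n_2)/(m_1-m_2)}$ respectively, hence represent the Markoff variables attached to $U$ and $V$. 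Since traces are invariant under conjugation and under inversion in $\SL(2,\Cset)$, the map $\psi_\rho$ is well-defined regardless of sign or representative choices.

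Finally, applying \eqref{eqn:trace} to the matrices $\rho(W_{n_1/m_1})$ and $\rho(W_{n_2/m_2})$ in $\SL(2,\Cset)$ gives
\[
\tr\rho(W_{n_1/m_1}W_{n_2/m_2}) + \tr\rho(W_{n_1/m_1}W_{n_2/m_2}^{-1}) = \tr\rho(W_{n_1/m_1})\,\tr\rho(W_{n_2/m_2}),
\]
which translates directly into $\psi_\rho(U) + \psi_\rho(V) = \psi_\rho(X)\psi_\rho(Y)$, the desired edge relation.

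The main obstacle is not the trace identity itself, which is a one-line computation, but the bookkeeping needed to ensure that the conjugacy class of $W_{n_1/m_1}W_{n_2/m_2}^{\pm 1}$ really matches the primitive class attached to the mediant slope. The cleanest route is to lean on Horowitz's bijection between slopes and primitive conjugacy classes, together with the fact that abelianization plus primitivity pin down a primitive element up to inversion; then the correspondence ${\mathcal X}_{\mu-2} = \psi_\mu$ recorded in \cite{MR2370281} confirms the identification.
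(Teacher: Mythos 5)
Your argument is correct and is precisely the one the paper intends: the paper gives no written proof but simply cites Theorems 1.2 and 1.3 of Osborne--Zieschang (the classification of primitive elements by their Farey slope and the fact that Farey neighbours correspond to generating pairs) together with the trace identity \eqref{eqn:trace}, which is exactly the skeleton you flesh out. The only quibble is attribution: the classification of primitives you invoke is the Osborne--Zieschang result \cite{MR608526} cited in the text (well-definedness up to conjugacy being their Corollary 3.2), not a theorem of Horowitz.
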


For any associated primitives $\{A, B\}$ and $\{C, D\}$ of $F_2$,
the commutator $[C, D]$ is conjugate to $[A, B]$ or $[A, B]^{-1}$.
(See Theorem 3.9 in \cite{MR0422434}.)
Thus, we have $\tr\rho[A, B] = \tr\rho[C, D]$.
It follows that

\begin{lemma}\label{lem:commutator}
If $G$ is a subgroup of $\PSL(2, \Cset)$ isomorphic to $F_2$,
then the trace of the commutator of associated primitives
of $G$ does not depend on the choice of associated primitives.
\end{lemma}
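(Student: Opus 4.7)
The plan is essentially to unpack the observation made in the sentence immediately preceding the lemma and translate it across the isomorphism $G \cong F_2$. Fix any isomorphism $\varphi : F_2 \to G$ and let $\{A,B\}$ be the images of the free generators. Given any other pair of associated primitives $\{C,D\}$ generating $G$, pull them back via $\varphi^{-1}$ to obtain an associated primitive pair of $F_2$. By Theorem 3.9 of \cite{MR0422434} (already cited in the paragraph above), the commutator of any associated primitive pair of $F_2$ is conjugate, inside $F_2$, to either $[A,B]$ or $[A,B]^{-1}$. Pushing this conjugacy forward by $\varphi$ shows that $[C,D]$ is conjugate inside $G \subset \PSL(2,\Cset)$ to $[A,B]$ or $[A,B]^{-1}$.

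It then suffices to check that $\tr[C,D]$ is well-defined on $\PSL(2,\Cset)$ and invariant under conjugation and inversion. For well-definedness, pick any $\SL(2,\Cset)$-lifts of $C$ and $D$: changing a lift by a sign leaves the commutator $CDC^{-1}D^{-1}$ unchanged, so $\tr[C,D]$ does not depend on the choice of lift. Invariance under conjugation is immediate, and invariance under inversion follows from the Cayley--Hamilton identity $X + X^{-1} = (\tr X)\,I$ valid for $X \in \SL(2,\Cset)$, which gives $\tr X^{-1} = \tr X$. Combining these observations with the conjugacy in the previous paragraph yields $\tr[A,B] = \tr[C,D]$.

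There is essentially no technical obstacle here; the lemma is a direct consequence of a purely combinatorial fact about $F_2$ (automorphisms permute commutators of generating pairs up to inversion and conjugation) combined with standard trace identities in $\SL(2,\Cset)$. The only subtlety worth flagging in the write-up is the $\PSL$-versus-$\SL$ issue, which is handled by the sign cancellation in the commutator.
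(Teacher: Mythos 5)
Your proposal is correct and follows essentially the same route as the paper: the authors also invoke Theorem 3.9 of Magnus--Karrass--Solitar to get that the commutator of any associated primitive pair of $F_2$ is conjugate to $[A,B]$ or $[A,B]^{-1}$, and then conclude by invariance of the trace under conjugation and inversion. Your extra care with the $\PSL$-versus-$\SL$ lifting (the sign cancellation in the commutator) is a detail the paper leaves implicit but is handled correctly in your write-up.
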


Recall that,
if $G$ is in the {\it Maskit slice} \cite{MR1241870,MR1913879},
then it is discrete and isomorphic to $F_2$ and
can be normalized as $G_\mu = \langle A, B_\mu \rangle$,
where
\[
    A = \begin{pmatrix} 1 & 2 \\ 0 & 1 \end{pmatrix}, \;
    B_\mu = \begin{pmatrix} -i\mu & -i \\ -i & 0\end{pmatrix}
    \text{ for some } \mu\in\Cset.
\]

\begin{corollary}
If $G_\mu$ is in the {\it Maskit slice},
then $J(G_\mu) = 4$.
\end{corollary}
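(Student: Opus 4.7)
The plan is to compute $J(A, B_\mu)$ directly for the canonical generating pair, and then use Lemma~\ref{lem:commutator} to show that this value is already the infimum over all generating pairs.

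First, I would observe that $A$ is parabolic with $\tr A = 2$, so $|\tr^2 A - 4| = 0$ and
\[
    J(A, B_\mu) = |\tr[A, B_\mu] - 2|.
\]
A short direct matrix multiplication, using the explicit forms of $A$ and $B_\mu$, yields $\tr[A, B_\mu] = -2$; this is exactly the defining normalization of the Maskit slice, which makes the commutator accidentally parabolic. Hence $J(A, B_\mu) = |-2-2| = 4$.

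For the reverse inequality, I would invoke Lemma~\ref{lem:commutator}: since $G_\mu$ lies in the Maskit slice, it is discrete and isomorphic to $F_2$, so every other generating pair $(C, D)$ of $G_\mu$ is a pair of associated primitives. The lemma then gives $\tr[C, D] = \tr[A, B_\mu] = -2$, so
\[
    J(C, D) = |\tr^2 C - 4| + |-2-2| = |\tr^2 C - 4| + 4 \geq 4
\]
for every such pair. Taking the infimum yields $J(G_\mu) \geq 4$, and combined with $J(A, B_\mu) = 4$ we conclude $J(G_\mu) = 4$.

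There is essentially no obstacle here: Lemma~\ref{lem:commutator} makes the summand $|\tr[C,D]-2|$ constantly equal to $4$ across all generating pairs, reducing the problem to minimizing $|\tr^2 C - 4|$, which is already $0$ at $C=A$. The only genuine computation is the single trace evaluation $\tr[A, B_\mu] = -2$, which is a routine $2 \times 2$ matrix calculation independent of $\mu$.
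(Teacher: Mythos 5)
Your proof is correct and follows essentially the same route as the paper: the paper also observes that $|\tr^2 A - 4| = 0$ and invokes Lemma~\ref{lem:commutator} to make the commutator term constant over all generating pairs, so the infimum is attained at $(A, B_\mu)$. You have merely written out the details (the computation $\tr[A,B_\mu] = -2$ and the explicit lower bound over all generating pairs) that the paper's one-line proof leaves implicit.
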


\begin{proof}
Since $|\tr^2 A - 4| = 0$,
by Lemma~\ref{lem:commutator}, 
we have $J(G_\mu) = J(A, B_\mu) = 4$..
\end{proof}

\subsection{Kissing Schottky groups}

We consider {\it kissing Schottky groups} studied in \cite{MR1913879}.
For a positive real number $k$, 
let $\rho_k$ be the $\SL(2, \Cset)$ representation 
of $F_2 = \langle A, B \rangle$ given by
\[
    \rho_k(A) = \begin{pmatrix} x & ik\cdot y \\ y/(ik) & x \end{pmatrix}, \quad
    \rho_k(B) = \begin{pmatrix} x & y \\ y & x \end{pmatrix},
\]
where $x, y$ are positive real numbers 
with $x^2 = y^2 + 1$ and $y^2 = 2/(k+1/k)$.
(See chapter 6, p.170 \cite{MR1913879}.)
The first condition guarantees that 
$A$ and $B$ have determinant $1$.
The second condition guarantees that
$\tr[A, B] = -2$.
%Define $G = \langle A, B\rangle$.
See Figure 6.8 in \cite{MR1913879}.
%We see that $\rho_k(F_2)$ is isomorphic to $F_2$ for any $k>0$.
For the sake of simplicity, 
we denote the Markoff map $\psi_{\rho_k}$ 
of the representation $\rho_k$ by $\psi$.

\begin{lemma}\label{lem:min}
We have $\Omega_\psi(2x) = \{ X_{0/1}, X_{1/0} \}$.
\end{lemma}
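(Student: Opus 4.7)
The plan is to combine a direct trace computation with the connectedness criterion of Lemma~\ref{lem:connected}. First I would verify that $\psi(X_{0/1}) = \tr\rho_k(A) = 2x$ and $\psi(X_{1/0}) = \tr\rho_k(B) = 2x$, so both regions lie in $\Omega_\psi(2x)$. Since $k>0$ forces $y>0$, we have $x>1$ and hence $2x>2$, so Lemma~\ref{lem:connected} applies and $\bigcup_{Y\in\Omega_\psi(2x)} Y$ is connected in $\Hset^2$.

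Two complementary regions meet in $\Hset^2$ if and only if their Farey labels are Farey-adjacent, so the topological connectedness above translates into graph-connectedness of $\Omega_\psi(2x)$ with respect to Farey adjacency. Consequently, if some $X_{p/q}\notin\{X_{0/1},X_{1/0}\}$ were in $\Omega_\psi(2x)$, a shortest Farey chain inside $\Omega_\psi(2x)$ from $X_{p/q}$ to $\{X_{0/1},X_{1/0}\}$ would supply a neighbor of $X_{0/1}$ or $X_{1/0}$ that lies in $\Omega_\psi(2x)$ yet is distinct from both. Such neighbors are exactly the regions $X_{1/n}$ and $X_{n/1}$ with $n\ne 0$, so the whole problem reduces to proving $|\psi(X_{1/n})|>2x$ and $|\psi(X_{n/1})|>2x$ for every $n\ne 0$.

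For this estimate I would exploit the fan of Farey triangles $\{0/1,\,1/(n-1),\,1/n\}$ to derive from the edge relation the three-term recursion $t_n = 2x\,t_{n-1}-t_{n-2}$, where $t_n := \psi(X_{1/n})$. The seeds are $t_0 = \psi(X_{1/0}) = 2x$ and $t_1 = \psi(X_{1/1})$, which one computes from the edge relation at $\{0/1,1/0,1/1\}$ together with the constraint $\tr[A,B]=-2$ via the identity $\tr[A,B] = \tr^2 A + \tr^2 B + \tr^2(AB) - \tr A\,\tr B\,\tr(AB) - 2$; this gives $t_1 = 2x^2 \pm 2ix\sqrt{2-x^2}$ and $|t_1|^2 = 8x^2$. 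Writing $\alpha = x+\sqrt{x^2-1}>1$ so that $\alpha+\alpha^{-1}=2x$, the solution takes the form $t_n = C\alpha^n + \overline{C}\,\alpha^{-n}$ with $|C|^2 = x^2/(x^2-1)$, and a short manipulation yields
\[
    |t_n|^2 = \frac{x^2}{x^2-1}\bigl(\alpha^{2n}+\alpha^{-2n}+4x^2-6\bigr).
\]
The inequality $|t_n|^2>4x^2$ then reduces to $\alpha^{2n}+\alpha^{-2n}>2$, which holds for every $n\ne 0$ since $\alpha>1$. The bound on $\psi(X_{n/1})$ follows by the symmetric fan $\{1/0,\,(n-1)/1,\,n/1\}$: the recursion and the initial data coincide, so the same closed form controls it.

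The main obstacle I anticipate is the trace recursion step: deriving the clean closed form that makes the inequality $\alpha^{2n}+\alpha^{-2n}>2$ transparent, and being careful that the degenerate case $x=1$ never arises (it is excluded precisely because $k>0$ implies $y>0$, hence $x>1$ strictly). Once those trace estimates are in hand, the connectedness lemma delivers the equality $\Omega_\psi(2x) = \{X_{0/1},X_{1/0}\}$.
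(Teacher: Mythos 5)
Your proposal is correct and follows essentially the same route as the paper: compute $\psi$ on the two fans of neighbors $X_{n/1}$ and $X_{1/n}$, show their moduli exceed $2x$ for $n\neq 0$ via the three-term recursion, and conclude with the connectedness criterion of Lemma~\ref{lem:connected}. The only difference is cosmetic — the paper establishes $|x_{n+1}|>|x_n|$ by induction from the edge relation and the bound $2x>2$, whereas you solve the recursion in closed form; both yield the same estimate.
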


\begin{proof}
Since $\psi(X_{0/1}) = \tr\rho_k(A) = 2x$ and
$\psi(X_{1/0}) = \tr\rho_k(B) = 2x$, 
by definition,
we have
$\{X_{0/1}, X_{1/0}\} \subset \Omega_\psi(2x)$.

Complementary regions which meet $X_{1/0}$ are
\[
    \ldots, X_{-3/1}, X_{-2/1}, X_{-1/1}, X_{0/1}, 
    X_{1/1}, X_{2/1}, X_{3/1}, \ldots.
\]
See Figure \ref{fig:Farey}.
Put $x_n = \psi(X_{n/1})$ for $n\in\Zset$.
%Let $\{Y_n\}_{n\in\Zset}$ be the bi-infinite sequence 
%of complementary regions meeting the region $0/1$,
%where $Y_{-1} = -1/1$, $Y_0 = 1/0$, $Y_1 = 1/1$.
%We set $y_n = \psi(Y_n)$ for any $n$.
Then, we have
\begin{align*}
    x_{-1} & = \psi(X_{-1/1}) = \tr\rho_k(AB^{-1}) =
    \frac{2(k+1)^2}{k^2+1} - \frac{2k^2-2}{k^2+1}i,\\
    x_0 & = \psi(X_{0/1}) = \tr\rho_k(A) =
    \frac{2(k+1)}{\sqrt{k^2+1}},\\
    x_1 & = \psi(X_{1/1}) = \tr\rho_k(AB) = 
    \frac{2(k+1)^2}{k^2+1} + \frac{2k^2-2}{k^2+1}i.
\end{align*}
By the edge relation, we have 
\[
    x_{n+1} = 2x \cdot x_n - x_{n-1}
    = \frac{2(k+1)}{\sqrt{k^2+1}} \cdot x_n - x_{n-1}
\]
for any $n\in\Zset$.
Observe that $|x_{n+1}| > |x_n|$ for any $n\geq 0$.
First, direct calculation shows that $|x_1| > |x_0|$.
Then, since $2(k+1)/\sqrt{k^2+1} > 2$, the edge relation 
implies that $|x_{n+1}| > |x_n|$.
We also see that 
$|x_{n-1}| > |x_n|$ for any $n\leq 0$.
(In fact, $|x_n| = |x_{-n}|$.)
Hence, $X_{0/1}$ is the only region in $\Omega_\psi(2x)$
which meets $1/0$.

Complementary regions which meet $X_{0/1}$ are
\[
    \ldots, X_{-1/3}, X_{-1/2}, X_{-1/1}, X_{1/0},
    X_{1/1}, X_{1/2}, X_{1/3}, \ldots .
\]
%$\{X_{1/n} \:|\: n\in\Zset \}$.
By the edge relation, we have $\psi(X_{1/n}) = \psi(X_{n/1})$.
Hence, $X_{1/0}$ is the only region in $\Omega_\psi(2x)$
which meets $X_{0/1}$,

Since $2x = 2(k+1)/\sqrt{k^2+1}  > 2$, 
by Lemma \ref{lem:connected}, 
$\Omega_\psi(2x)$ is connected.
Hence, 
$\Omega_\psi(2x) = \{X_{0/1}, X_{1/0}\}$,
and the lemma is proved.
\end{proof}

By this lemma, 
we have $|\tr\rho_k(C)| \geq |\tr\rho_k(A)|$
for any primitive element $C\in F_2$.
Since $\tr\rho_k(A)$ is real and greater than $2$,
we have $|\tr^2\rho_k(C) - 4| \geq |\tr^2\rho_k(A) - 4|$.
Since $\rho_k(F_2)$ is isomorphic to $F_2$, 
by Lemma \ref{lem:commutator},
we have the following.
\begin{proposition}
For any kissing Schottky group (representation) $\rho_k$, 
we have
\[
    J(\rho_k(F_2)) 
    = J(\rho_k(A), \rho_k(B))
%    = \left| \frac{4(k+1)^2}{k^2+1} - 4\right| + | -2 -2 |
    = \frac{4(k+1)^2}{k^2+1}.
\]
In particular,
we have $J(\rho_1(F_2)) = 8$, and
$\lim_{k\to\infty} J(\rho_k(F_2)) = 4$,
For any real $r$ with $4 < r \leq 8$, 
the J\o rgensen number is realized
by a kissing Schottky group.
\end{proposition}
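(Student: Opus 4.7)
The plan is to reduce the infimum in the definition of $J(\rho_k(F_2))$ to the single pair $(\rho_k(A), \rho_k(B))$, by showing that both summands of the J\o rgensen invariant are controlled uniformly over all generating pairs.

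First I would compute $J(\rho_k(A), \rho_k(B))$ directly from the explicit matrices. One has $\tr\rho_k(A) = 2x$ with $x^2 = y^2+1$, so $|\tr^2\rho_k(A)-4| = 4x^2-4 = 4y^2$; combined with the design choice $\tr[\rho_k(A),\rho_k(B)] = -2$, this gives $|{\tr[\rho_k(A),\rho_k(B)]}-2| = 4$ and hence $J(\rho_k(A),\rho_k(B)) = 4y^2 + 4$. Substituting $y^2 = 2/(k+1/k) = 2k/(k^2+1)$ yields $4(k+1)^2/(k^2+1)$, matching the claimed value.

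Next I need to show that no other generating pair $(C, D)$ of $\rho_k(F_2)$ beats this. Here Lemma \ref{lem:commutator} pins down the commutator term: since the commutator of any pair of associated primitives is conjugate to $[A,B]^{\pm 1}$, we get $\tr[C,D] = \tr[A,B] = -2$, so $|{\tr[C,D]}-2| = 4$ regardless of the choice. For the trace-squared term, primitivity of $C$ means $\tr\rho_k(C) = \psi(X_{p/q})$ for some $p/q \in \widehat\Qset$, and Lemma~\ref{lem:min} gives $|\psi(X_{p/q})| \geq 2x = \tr\rho_k(A)$. Because $2x > 2$, the reverse triangle inequality yields
\[
    |\tr^2\rho_k(C) - 4| \;\geq\; |\tr\rho_k(C)|^2 - 4 \;\geq\; 4x^2 - 4 \;=\; |\tr^2\rho_k(A) - 4|,
\]
so $J(C,D) \geq J(\rho_k(A), \rho_k(B))$, and the equality $J(\rho_k(F_2)) = J(\rho_k(A), \rho_k(B))$ follows.

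Finally, for the realization statement, I would note that $f(k) := 4(k+1)^2/(k^2+1)$ is continuous on $[1,\infty)$, satisfies $f(1) = 8$ and $\lim_{k\to\infty}f(k) = 4$, so by the intermediate value theorem every $r \in (4,8]$ equals $f(k)$ for some $k \geq 1$, and the corresponding kissing Schottky group realizes $r$. The main obstacle is the inequality $|\tr^2\rho_k(C)-4| \geq |\tr^2\rho_k(A)-4|$ when $\tr\rho_k(C)$ is complex; this is handled cleanly by the reverse triangle inequality only because Lemma~\ref{lem:min} forces $|\tr\rho_k(C)| \geq 2x > 2$, which is exactly where the kissing-Schottky hypothesis (equivalently, $\tr[A,B]=-2$ and $k\geq 1$) enters.
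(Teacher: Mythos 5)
Your proposal is correct and follows essentially the same route as the paper: Lemma~\ref{lem:min} gives $|\tr\rho_k(C)|\geq 2x$ for every primitive $C$, the reverse triangle inequality then bounds the trace-squared term from below by $|\tr^2\rho_k(A)-4|=4x^2-4$, and Lemma~\ref{lem:commutator} (together with freeness of $\rho_k(F_2)$, so that every generating pair comes from associated primitives) fixes the commutator term at $4$. You merely make explicit the arithmetic $4y^2+4=4(k+1)^2/(k^2+1)$ and the intermediate value argument that the paper leaves implicit.
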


% \begin{remark}
% The same method shows that 
% if $G$ is the Apollonian gasket,
% then $J(G) = 4$.
% \end{remark}

\subsection{$\theta$-Schottky groups}

Here, we consider Fuchsian Schottky groups
described in Project 4.2, p.118 in \cite{MR1913879}.
For $0 < \theta \leq \pi/4$, 
let $\rho_\theta$ be the $\SL(2, \Cset)$ representation
of $F_2 = \langle A, B\rangle$ given by
\[
    \rho_\theta(A) = \frac 1 {\sin\theta} 
    \begin{pmatrix} 1 & i\cos\theta \\ 
    -i\cos\theta & 1 \end{pmatrix}, \quad
    \rho_\theta(B) = \frac 1 {\sin\theta}
    \begin{pmatrix} 1 & \cos\theta \\
    \cos\theta & 1
    \end{pmatrix}.
\]
%%and define $G_\theta = \rho_\theta ( \langle A, B \rangle )$.
See Figure 4.10 in \cite{MR1913879}.
We denote the Markoff map $\psi_{\rho_\theta}$ by $\psi$.

\begin{lemma}\label{lem:mintheta}
We have $\Omega_\psi(2/\sin\theta) = \{ X_{0/1}, X_{1/0} \}$.
\end{lemma}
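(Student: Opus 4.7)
The plan is to mirror the argument used for Lemma \ref{lem:min} (the kissing Schottky case), replacing $2x = 2(k+1)/\sqrt{k^2+1}$ by $2/\sin\theta$. Direct computation from the matrices gives $\psi(X_{0/1}) = \tr\rho_\theta(A) = 2/\sin\theta$ and $\psi(X_{1/0}) = \tr\rho_\theta(B) = 2/\sin\theta$, so $\{X_{0/1}, X_{1/0}\} \subset \Omega_\psi(2/\sin\theta)$. The whole task is to rule out any other region.

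To show that $X_{0/1}$ is the only region adjacent to $X_{1/0}$ lying in $\Omega_\psi(2/\sin\theta)$, set $x_n = \psi(X_{n/1})$ and compute the three base values. A short matrix multiplication gives $\rho_\theta(AB)$ and $\rho_\theta(AB^{-1})$ both of trace $2/\sin^2\theta$, so $x_{-1} = x_0 \cdot (1/\sin\theta) \cdot \text{something}$; more cleanly, I get $x_0 = 2/\sin\theta$ and $x_{\pm 1} = 2/\sin^2\theta$. Because $\sin\theta < 1$ (as $0 < \theta \leq \pi/4$), we have $|x_{\pm 1}| > |x_0|$. The edge relation reads
\[
    x_{n+1} = \frac{2}{\sin\theta}\, x_n - x_{n-1},
\]
and since $2/\sin\theta > 2$, an immediate induction shows $|x_{n+1}| > |x_n|$ for every $n \geq 0$ and, symmetrically, $|x_{n-1}| > |x_n|$ for every $n \leq 0$ (in fact $|x_n| = |x_{-n}|$ by the symmetry in the construction, and all $x_n$ are real positive so $|\cdot|$ is just the value). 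Hence $X_{0/1}$ is the unique region adjacent to $X_{1/0}$ with $|\psi| \leq 2/\sin\theta$.

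By the evident symmetry of $\rho_\theta$ in $A$ and $B$, the edge relation forces $\psi(X_{1/n}) = \psi(X_{n/1})$, so the identical argument shows $X_{1/0}$ is the only region adjacent to $X_{0/1}$ lying in $\Omega_\psi(2/\sin\theta)$. Finally, since $2/\sin\theta > 2$, Lemma~\ref{lem:connected} guarantees that $\cup_{X \in \Omega_\psi(2/\sin\theta)} X$ is connected in $\Hset^2$; combined with the two preceding steps this forces $\Omega_\psi(2/\sin\theta) = \{X_{0/1}, X_{1/0}\}$.

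The only real subtlety is checking that the base case $|x_1| > |x_0|$ actually holds for the full range $0 < \theta \leq \pi/4$ and propagates cleanly through the recursion; once the concrete values $x_0 = 2/\sin\theta$ and $x_{\pm 1} = 2/\sin^2\theta$ are in hand, the positivity and the bound $2/\sin\theta > 2$ make the induction automatic, so no serious obstacle is anticipated.
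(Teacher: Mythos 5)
Your proposal is correct and follows essentially the same route as the paper: the paper computes the same base values $\psi(X_{0/1})=\psi(X_{1/0})=2/\sin\theta$ and $\psi(X_{\pm 1/1})=2/\sin^2\theta$, states the same edge-relation recursion, and then explicitly defers to the proof of Lemma~\ref{lem:min} for the induction and the appeal to Lemma~\ref{lem:connected}, exactly as you do. Your added observation that all $x_n$ are real and positive (so the induction is immediate) is a harmless simplification consistent with the paper's argument.
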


\begin{proof}
Note that
\begin{align*}
    \psi(X_{-1/1}) & = \frac{2}{\sin^2\theta}, \;
    \psi(X_{ 0/1}) = 
    \psi(X_{ 1/0}) = \frac{2}{\sin  \theta}, \;
    \psi(X_{ 1/1}) = \frac{2}{\sin^2\theta}. \\
    \psi(X_{(n+1)/1}) & = \frac{2}{\sin\theta} \cdot 
    \psi(X_{n/1}) - \psi(X_{(n-1)/1})
\end{align*}
Then, the rest of the proof is the same 
as for Lemma \ref{lem:min}.
\end{proof}

Hence, $|\tr^2\rho_\theta(C) - 4| \geq 
|\tr^2\rho_\theta(A) - 4|$
for any primitive $C\in F_2$.
Since $\rho_\theta(F_2)$ is isomorphic to $F_2$,
by Lemma \ref{lem:commutator}:
\begin{proposition}
For any $\theta$-Schottky group (representation) $\rho_\theta$,
we have
\[
    J(\rho_\theta(F_2)) 
    = J(\rho_\theta(A), \rho_\theta(B))
    = \frac{4\cos^2\theta}{\sin^4\theta}.
\]
In particular,
we have $J(\rho_{\pi/4}(F_2)) = 8$, and
$\lim_{\theta\to 0} J(\rho_\theta(F_2)) = \infty$,
For any real $r$ with $r \geq 8$, 
the J\o rgensen number is realized
by a $\theta$-Schottky group.
\end{proposition}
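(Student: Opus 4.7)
The plan is to replicate the structure of the proof of the kissing Schottky proposition, replacing Lemma \ref{lem:min} by Lemma \ref{lem:mintheta} and recomputing the two pieces of $J(A,B)$. The infrastructure is already set up: Lemma \ref{lem:mintheta} together with the paragraph preceding the statement guarantees that every primitive $C\in F_2$ satisfies $|\tr\rho_\theta(C)| \geq 2/\sin\theta \geq 2\sqrt{2}$, whence by the reverse triangle inequality
\[
    |\tr^2\rho_\theta(C) - 4| \geq \frac{4}{\sin^2\theta} - 4 = \frac{4\cos^2\theta}{\sin^2\theta}.
\]
Meanwhile, Lemma \ref{lem:commutator} pins $|\tr[C,D] - 2|$ to a value depending only on $\rho_\theta(F_2)$, not on the generating pair $(C,D)$. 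Combining these says the infimum defining $J(\rho_\theta(F_2))$ is already attained at $(A,B)$, so only the explicit evaluation of $J(\rho_\theta(A), \rho_\theta(B))$ remains.

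The one computation that is not already in the paper is the trace of the commutator. I would obtain it via the Fricke identity
\[
    \tr[A,B] = (\tr A)^2 + (\tr B)^2 + (\tr AB)^2 - \tr A\,\tr B\,\tr AB - 2,
\]
using $\tr\rho_\theta(A) = \tr\rho_\theta(B) = 2/\sin\theta$ together with $\tr\rho_\theta(AB) = 2/\sin^2\theta$ (read off from the proof of Lemma \ref{lem:mintheta}, or recovered from $\tr\rho_\theta(AB^{-1}) = \psi(X_{-1/1}) = 2/\sin^2\theta$ and the trace identity (\ref{eqn:trace})). A brief manipulation, using the factorization $8/\sin^2\theta - 4/\sin^4\theta - 4 = -4(\cos^2\theta/\sin^2\theta)^2$, gives $\tr\rho_\theta[A,B] - 2 = -4\cos^4\theta/\sin^4\theta$. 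Adding the two contributions,
\[
    J(\rho_\theta(A), \rho_\theta(B)) = \frac{4\cos^2\theta}{\sin^2\theta} + \frac{4\cos^4\theta}{\sin^4\theta} = \frac{4\cos^2\theta}{\sin^4\theta}(\sin^2\theta + \cos^2\theta) = \frac{4\cos^2\theta}{\sin^4\theta}.
\]

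For the realization claim, I would note that the map $\theta \mapsto 4\cos^2\theta/\sin^4\theta$ is continuous on $(0,\pi/4]$, equals $8$ at $\theta = \pi/4$, and tends to $+\infty$ as $\theta \to 0^+$; the intermediate value theorem then yields, for each $r \geq 8$, a $\theta$ with $J(\rho_\theta(F_2)) = r$. There is no real obstacle in this proposition: the argument parallels the kissing Schottky case essentially line for line, and the only new ingredient, the Fricke computation of the commutator trace, collapses cleanly once one spots the square $(\cos^2\theta/\sin^2\theta)^2$ hidden in the expression.
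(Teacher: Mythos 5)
Your proposal is correct and follows the paper's argument essentially line for line: Lemma \ref{lem:mintheta} gives the minimality of $|\tr\rho_\theta(A)|$ among primitives, Lemma \ref{lem:commutator} fixes the commutator term, and the explicit value follows from the Fricke identity (your computation $\tr\rho_\theta[A,B]-2=-4\cos^4\theta/\sin^4\theta$ checks out and is exactly the step the paper leaves implicit). Nothing further is needed.
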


\begin{remark}
$\rho_{\pi/4}$ ($\theta = \pi/4$) in this subsection and 
$\rho_1$ ($k=1$) in the last subsection are the same representation.
\end{remark}

\section{The diagonal slice of Schottky space}

J\o rgensen numbers can be studied by computer experiments.
In this section, we present a computer generated picture
(Figure \ref{fig:ds}) which estimates J\o rgensen numbers
in the diagonal slice of Schottky space.

\subsection{The diagonal slice of Schottky space}

Let us go back to the singular solid torus in section 2.
%Let
%\[
%  G =  \langle A, B \: | \:  (A B A^{-1} B^{-1} A B)^2 = B^2
%  = (A B A^{-1} B^{-1} A)^2 =id \rangle
%\]
%and define $\rho_a : G \to \PSL(2, \Cset)$ by
Recall that
\begin{equation}\label{eqn:AB}
A_a  = \begin{pmatrix} -3a/2 & 1/2\\ -1/2 & -1/2a \end{pmatrix},
\quad 
B = \begin{pmatrix} i & 0 \\ 0 & -i \end{pmatrix}.
\end{equation}
%(Recall that $\rho_a$ satisfies the relations of $G$.
%See equation (\ref{eqn:group}).)
In this section, we consider that $A_a, B\in\SL(2, \Cset)$.
We denote $\langle A_a, B\rangle$ by $G_a$.
Set 
\begin{equation}\label{eqn:atox}
    x = -\tr A_a^2 
    = - \frac{9 a^2}{4} - \frac{1}{4 a^2} + \frac{1}{2}
\end{equation}
Then, by the trace identity (\ref{eqn:trace}), 
we have 
\begin{equation}\label{eqn:one}    
    \tr A_a   = \sqrt{-x+2},\quad
    \tr B     = 0,\quad
    \tr A_a B = \sqrt{x+1},\quad
    \tr [A_a, B]  = 1. 
\end{equation}
See section 5.0.2
(in particular, Remark 5.4 for the sign of the square roots)
in \cite{STY}.
From now on, we consider that $x$ is a complex parameter.
The locus $\mathcal D$ in the $x$-plane 
of discrete and faithful representations
was fully determined
by computing Keen-Series {\it pleating rays} \cite{STY}.
(Here, ``faithful'' means that $\langle A_a, B\rangle$ is isomorphic to 
$\langle A_1, B\rangle$ as an abstract group.)
$\mathcal D$ is called {\it the diagonal slice of Schottky space}.
See Figure \ref{fig:ds}.
\begin{figure}[t]
\centering
\includegraphics[height=7cm]{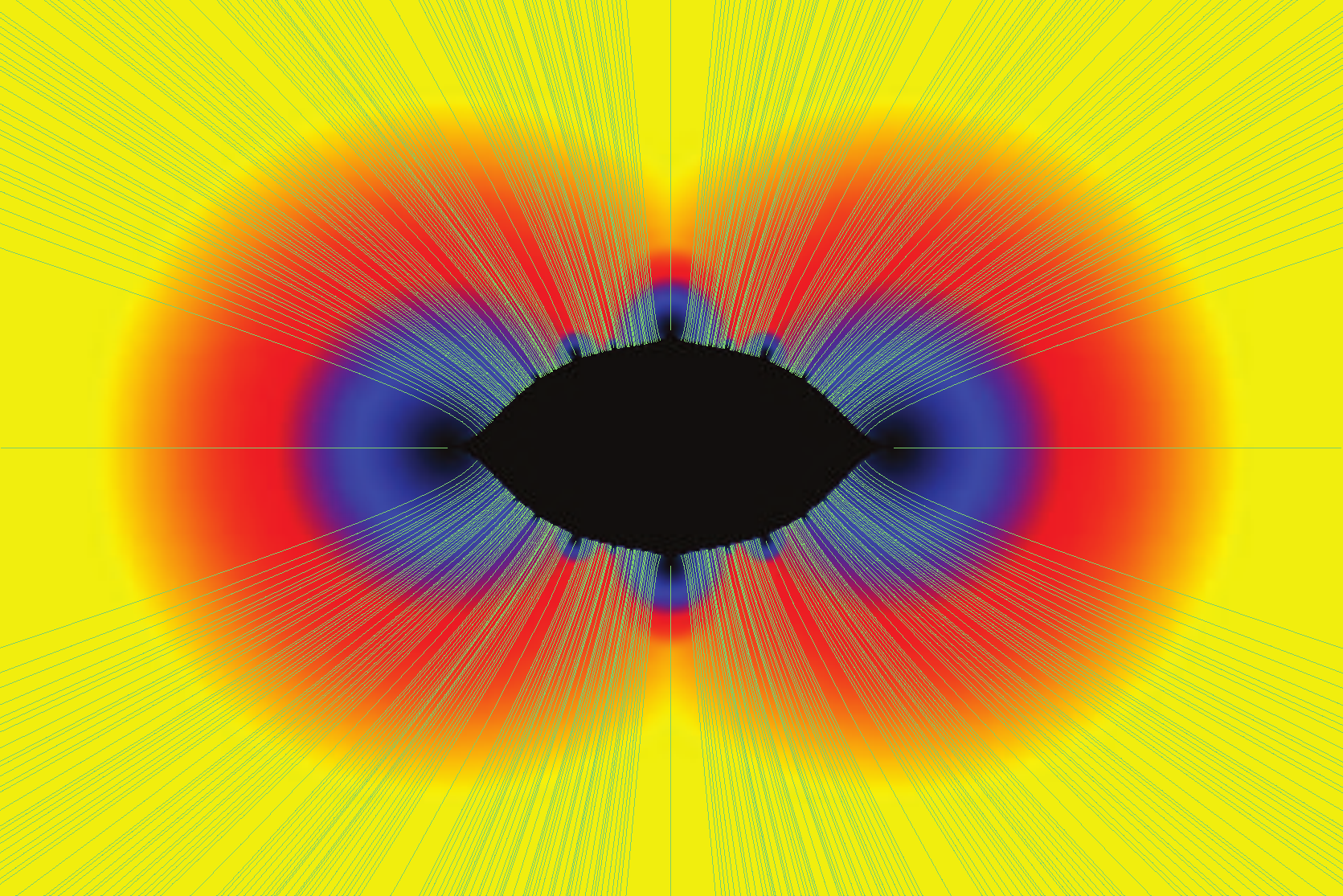}
\hspace{1cm}
\includegraphics[height=6cm]{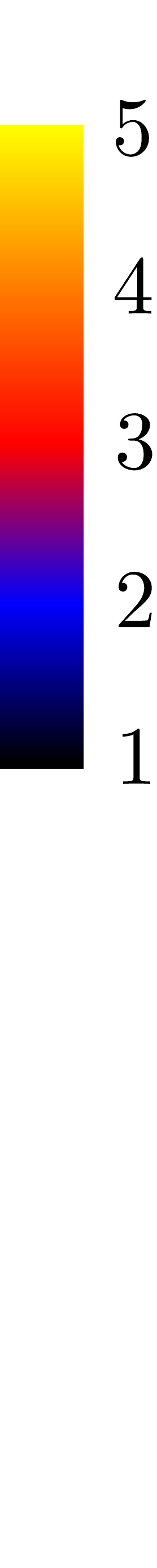}
\caption{The diagonal slice and J\o rgensen number:
$-7 \leq \Re x \leq 8, -5\leq \Im x \leq 5$.
The color indicates estimates of J\o rgensen number from above.}
\label{fig:ds}
\end{figure}
The outside of the {\it center black eye} 
corresponds to $\mathcal D$.
$\mathcal D$ is foliated by pleating rays.
We will describe pleating rays briefly in the next subsection.

Let $a$ be a complex number such that
$x = -\tr A_a^2 \in\mathcal D$.
We denote the representation from
$F_2 = \langle {\mathcal A}, {\mathcal B}\rangle$ to $\SL(2, \Cset)$
which sends $\mathcal A$ to $A_a$ and $\mathcal B$ to $B$ by $\rho_x$.
Let $\psi_x : \widehat\Qset \to \Cset$ be the Markoff map 
associated with $\langle A_a, B\rangle$:
\[
    \psi_x(0/1) = \tr A_a, \; 
    \psi_x(1/0) = \tr B, \; 
    \psi_x(1/1) = \tr A_a B.
\]
If $W$ and $V$ are associated primitives of $F_2$,
then their images in $G_a$ generate $G_a$.
By Lemma \ref{lem:commutator} and
equation (\ref{eqn:one}),
$\tr\rho_x ([W, V]) = \tr [A_a,B] = 1$.
Hence,
\[
    J(\rho_x(W), \rho_x(V)) = |\psi_x^2(n/m) - 4| + 1 \geq J(G_a),
\]
where $n/m$ is determined by the abelianization
$\alpha(W) = {\mathcal A}^m {\mathcal B}^n$.
Hence, 
\[
    \Psi(x) := \inf_{q\in \widehat\Qset} |\psi_x^2(q) - 4| + 1
\]
gives an estimate of $J(G_a)$ from above.
We calculate $\Psi(x)$ by computer
for $\{ x\in\Cset \:|\: -7 \leq \Re(x) \leq 8, -5 \leq \Im(x) \leq 5\}$.
The color 
outside the black eye in Figure \ref{fig:ds} indicates 
the values of $\Psi(x)$.
Since thees groups are discrete and faithful (in particular non-elementary), 
$\Psi(x) \geq 1$.
Since $|\psi^2_x(1/0) - 4| + 1 = 5$,
we have $\Psi(x) \leq 5$.
Figure \ref{fig:ds} suggests that
for each $1 \leq r \leq 5$,
there are many non-elementary Kleinian groups 
in the diagonal slice with J\o rgensen number $r$.

\begin{remark}
In practice, we can not calculate $\psi_x^2(q)$ 
for all $q\in \widehat \Qset$,
and Figure \ref{fig:ds} is an approximation of $\Psi(x)$
by calculating $\psi_x^2(q)$ for many $q \in \widehat\Qset$.
But, if $x$ is in the {\it Bowditch set},
in principle, 
we can calculate $\Psi(x)$.
The key is Lemma 3.24 in \cite{MR2370281}.
The diagonal slice of Schottky space seems to
coincide with the Bowditch set.
See Section 2, \cite{STY}.
\end{remark}

\begin{question}
Does the equation $J(G_a) = \Psi(x)$ hold
for each $x\in\mathcal D$?
Note that, since $G_a$ is not free, 
there might be generating pairs of $G_a$
which do not come from associated primitives in $F_2$.
\end{question}

\subsection{J\o rgensen groups on the boundary of the diagonal slice}

We consider the case $\Psi(x) = 1$ on $\partial \mathcal D$.
We begin by describing pleating rays very briefly.
See \cite{STY} for detail.

For $p/q\in\widehat\Qset$, 
the {\it real trace locus} $\Rset_{p/q}$ of $p/q$ is 
$\{ x\in\Cset \:|\: \psi_x(p/q) \in (-\infty, -2] \cup [2, \infty) \}$.
The rational pleating ray ${\mathcal P}_{p/q}$ is 
a union of connected non-singular
branches of $\Rset_{p/q}$ (Corollary 4.11 \cite{STY}).
The rational pleating rays are indexed by $\widehat\Qset/\sim$,
where $p/q\sim p'/q'$ if and only if $p'/q' = \pm p/q + 2k, k\in \Zset$
(Proposition 4.8, \cite{STY}).
(If $p/q\sim p'/q'$, then we have $\psi_x(p/q) = \pm \psi_x(p'/q')$.)
For example,
the condition $a\in\Rset$, $1 \leq a \leq (\sqrt{7}+2)/3$ 
in subsection \ref{subsec:JN}
corresponds to
$\{x\in\Rset\:|\:-5 \leq x \leq -2\} \subset {\mathcal P}_{0/1}$.

If $p/q$ is not an integer,
${\mathcal P}_{p/q}$ consists of two components (branches).
One is in the upper-half plane, and the other in the lower-half plane.
Let $e_{p/q}$ be the end point of ${\mathcal P}_{p/q}$
in the upper-half plane,
or on the real axis if $p/q$ is an integer.
By construction, we have $\psi_{e_{p/q}}^2(p/q) - 4 = 0$.
Hence, $\Psi(e_{p/q}) = 1$,
and the corresponding representation is a J\o rgensen group of parabolic type.

In order to describe these groups,
we recall the Li-Oichi-Sato normalization:
\begin{lemma}[Lemma 3.1 \cite{MR2205423}]
Let $M$ and $N$ be elements of $\PSL(2, \Cset)$
such that $M$ is parabolic and $N$ is elliptic or loxodromic.
Then, $M, N$ can be normalized as
\[
    M = \begin{pmatrix} 1 & 1 \\ 0 & 1 \end{pmatrix}, \quad
    N_{\sigma, \mu} = \begin{pmatrix} \mu\sigma & \mu^2\sigma - 1/\sigma \\
                              \sigma & \mu\sigma  \end{pmatrix},
\]
where $\sigma\in\Cset\setminus\{0\}$ and $\mu\in\Cset$.
\end{lemma}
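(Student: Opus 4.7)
The plan is to exploit the $\PSL(2,\Cset)$ conjugation action in two stages: first normalize $M$, then use the residual freedom to normalize $N$.

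First I would use the transitivity of $\PSL(2,\Cset)$ on $\widehat{\Cset}$ to conjugate the unique fixed point of the parabolic $M$ to $\infty$. This places $M$ in upper-triangular form with equal diagonal entries and a nonzero upper-right entry $t$. A further conjugation by a diagonal matrix $\begin{pmatrix} \lambda & 0 \\ 0 & \lambda^{-1} \end{pmatrix}$ (which fixes $\infty$) rescales $t$ to $1$, so $M = \begin{pmatrix} 1 & 1 \\ 0 & 1 \end{pmatrix}$ in $\PSL(2,\Cset)$.

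Next I would write $N = \begin{pmatrix} a & b \\ c & d \end{pmatrix}$ with $ad-bc = 1$. Since $M$ and $N$ do not share the fixed point at $\infty$ (otherwise $\langle M, N\rangle$ is elementary, which is excluded in the setting where this normalization is applied), the lower-left entry satisfies $c \neq 0$, and I set $\sigma := c$. The stabilizer of $M$ inside $\PSL(2,\Cset)$ is the group of upper-triangular matrices $T_s = \begin{pmatrix} 1 & s \\ 0 & 1 \end{pmatrix}$, so further conjugation of $N$ by $T_s$ preserves the normalization of $M$. A direct computation gives
\[
    T_s N T_s^{-1} = \begin{pmatrix} a + sc & b + s(d-a) - s^2 c \\ c & d - sc \end{pmatrix}.
\]
Choosing $s = (d-a)/(2c)$ symmetrizes the diagonal so that both entries equal $(a+d)/2$. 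Defining $\mu := (a+d)/(2\sigma)$, the diagonal becomes $\mu\sigma$, and the lower-left remains $\sigma$.

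It remains to verify that the upper-right entry becomes $\mu^2\sigma - 1/\sigma$. With this $s$, the upper-right is $b + s^2 c = b + (d-a)^2/(4c)$, while $\mu^2\sigma - 1/\sigma = (a+d)^2/(4c) - 1/c$. Equality amounts to $4bc = (a+d)^2 - (d-a)^2 - 4 = 4ad - 4$, i.e.\ to the determinant relation $ad - bc = 1$, which holds. Thus $N$ has been conjugated into the form $N_{\sigma,\mu}$, completing the proof. The only subtle point is the implicit use of $c \neq 0$; the main ``obstacle'' is purely bookkeeping, namely recognizing that the upper-right identity collapses precisely to $\det N = 1$.
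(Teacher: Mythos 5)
The paper does not prove this lemma at all; it simply imports it from Li--Oichi--Sato \cite{MR2205423}, so there is no in-paper argument to compare against. Your proof is correct and is the standard two-stage normalization one would expect: the conjugation computation $T_s N T_s^{-1}$ is right, the choice $s=(d-a)/(2c)$ does equalize the diagonal entries at $(a+d)/2=\mu\sigma$, and your reduction of the upper-right identity to $ad-bc=1$ checks out. The one point to make explicit rather than parenthetical is the hypothesis guaranteeing $c\neq 0$: as literally stated here, the lemma is false when $N$ fixes $\infty$ as well (an elliptic or loxodromic $N$ sharing the parabolic fixed point of $M$ has lower-left entry $0$, and conjugation by the stabilizer $\{T_s\}$ of the normalized $M$ preserves that, so $N$ can never reach the form $N_{\sigma,\mu}$ with $\sigma\neq 0$). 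You correctly identify that the intended setting excludes a common fixed point --- in the source the lemma is applied to non-elementary groups --- but since that assumption does not appear in the statement as quoted, it should be stated as a needed hypothesis rather than left implicit.
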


\begin{remark}\label{rem:LOS}
Li-Oichi-Sato 
\cite{MR2205423} \cite{MR2107659} \cite{MR2153913}
considered the case $\mu = i k$, $k\in\Rset$.
They conjectured that,
for any J\o rgensen group $G$ of parabolic type,
there exists a marked group 
$G_{\sigma, ik} = \langle M, N_{\sigma,ik}\rangle$,
$\sigma \in\Cset\setminus\{0\}$,
$k\in\Rset$, such that $G_{\sigma,ik}$ is conjugate to $G$.
Later, Callahan \cite{MR2525101} found counter
examples for this conjecture.
\end{remark}

Now, we compute some examples.
We begin by calculating some Markoff maps 
and the end point of pleating rays:
\begin{align*}
    \psi_x^2(0/1)-4 &= (-x+2)-4,        & e_{0/1} &= -2\\
    \psi_x^2(1/3)-4 &= (-x+1)^2(x+1)-4, & e_{1/3} &\approx -0.5652 + 1.0434 i,\\
    \psi_x^2(3/8)-4 &= (-x+2)(x+1)(x^3-x^2-1)^2-4, &e_{3/8} &\approx -0.2992 + 1.0726 i,\\
    \psi_x^2(2/5)-4 &= x^4(-x+2)-4,     & e_{2/5} &\approx -0.1372 + 1.1260 i,\\
    \psi_x^2(1/2)-4 &= (-x+2)(x+1)-4,   & e_{1/2} &= (1+\sqrt{7}i)/2.
\end{align*}
(Note that the equation $\psi_x^2(p/q) = 4$ have many roots, 
and in order to make the right choice for $e_{p/q}$, 
we need \cite{STY}.)
Let $A_{p/q}$ denote the matrix in (\ref{eqn:AB}) 
such that $e_{p/q} = - \tr A_a^2$.
Then, for example, 
\begin{align*}
    &(A_{0/1}, B), \:
    (A_{1/3}^3 B, A_{1/3}), \:
    (A_{3/8}^3 B A_{3/8}^3 B A_{3/8}^2 B, A_{3/8}^3 B), \\
    &(A_{2/5}^3 B A_{2/5}^2 B, A_{2/5}^3 B), \:
    (A_{1/2}^2 B, A_{1/2})
\end{align*}
generate J\o rgensen groups of parabolic type.
The Li-Oichi-Sato parameters for these groups are as follows:
\begin{align*}
    (\sigma, \mu) & =       ( i, 0                ) & \text{for } & (A_{0/1}, B) \\
    (\sigma, \mu) & \approx (-i, 0.1597 + 0.8166 i) & \text{for } & (A_{1/3}^2 B, A_{1/3}) \\
    (\sigma, \mu) & \approx ( i, 0.1839 + 0.9356 i) & \text{for } & (A_{3/8}^3 B A_{3/8}^3 B A_{3/8}^2 B, A_{3/8}^3 B) \\
    (\sigma, \mu) & \approx ( i, 0.3016 + 0.9041 i) & \text{for } & (A_{2/5}^3 B A_{2/5}^2 B, A_{2/5}^3 B) \\
    (\sigma, \mu) & =       ( i, (1 + \sqrt{7}i)/4) & \text{for } & (A_{1/2}^2 B, A_{1/2})
\end{align*}
We conjecture that 
groups which correspond to
the end points of the rational pleating rays
(except $-2$ and $3$ on the real axis)
are counter examples for Li-Oichi-Sato's conjecture
in Remark \ref{rem:LOS}.

\begin{remark}
Let $y$ be an element of $\mathcal D$ such that
$\psi_y$ has a sequence $\{q_n\}_{n\in\Zset}$
of distinct elements of $\widehat\Qset/\sim$
with $\lim_{n\to\infty}\psi_y^2(q_n) = 4$.
Then $\Psi(y) = 1$ and $y$ corresponds to a 
geometrically infinite group
with unbounded geometry.
\end{remark}

\begin{remark}
The limit set of 
$\langle M, N_{\sigma, \mu}\rangle$ with
$(\sigma, \mu) = (i, 0)$ is the real line.
The limit sets of other examples are complicated and beautiful.
For example, 
see Figure~\ref{fig:limitset2} for
the limit set of the third case 
$(\sigma, \mu) \approx (i, 0.1839 + 0.9356 i)$.
\begin{figure}[ht]
\centering
\includegraphics[height=7cm]{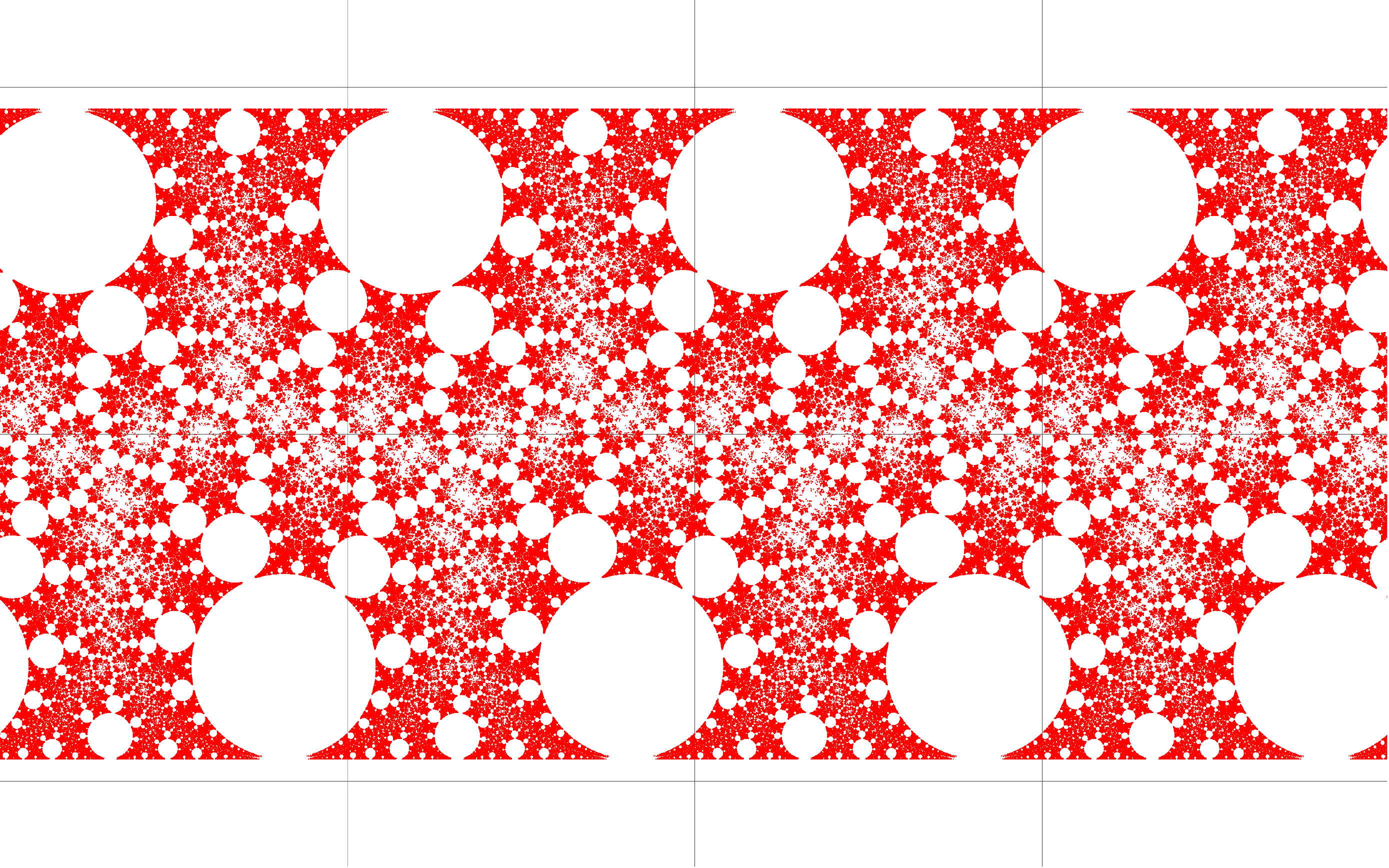}
\caption{The limit set of $\langle M, N_{\sigma, \mu}\rangle$ 
with $(\sigma, \mu) \approx (i, 0.1839 + 0.9356 i) $} 
\label{fig:limitset2}
\end{figure}
\end{remark}

%\bibliographystyle{plain}
%\bibliography{ref}

\begin{thebibliography}{10}

\bibitem{MR698777}
Alan~F. Beardon.
\newblock {\em The geometry of discrete groups}, volume~91 of {\em Graduate
  Texts in Mathematics}.
\newblock Springer-Verlag, New York, 1983.

\bibitem{MR1643429}
B.~H. Bowditch.
\newblock Markoff triples and quasi-{F}uchsian groups.
\newblock {\em Proc. London Math. Soc. (3)}, 77(3):697--736, 1998.

\bibitem{MR2525101}
Jason Callahan.
\newblock J\o rgensen number and arithmeticity.
\newblock {\em Conform. Geom. Dyn.}, 13:160--186, 2009.

\bibitem{MR1004006}
Werner Fenchel.
\newblock {\em Elementary geometry in hyperbolic space}, volume~11 of {\em de
  Gruyter Studies in Mathematics}.
\newblock Walter de Gruyter \& Co., Berlin, 1989.
\newblock With an editorial by Heinz Bauer.

\bibitem{MR1040927}
F.~W. Gehring and G.~J. Martin.
\newblock Stability and extremality in {J}\o rgensen's inequality.
\newblock {\em Complex Variables Theory Appl.}, 12(1-4):277--282, 1989.

\bibitem{MR2351012}
Francisco Gonz{\'a}lez-Acu{\~n}a and Arturo Ram{\'{\i}}rez.
\newblock J\o rgensen subgroups of the {P}icard group.
\newblock {\em Osaka J. Math.}, 44(2):471--482, 2007.

\bibitem{MR0427627}
Troels J{\o}rgensen.
\newblock On discrete groups of {M}\"obius transformations.
\newblock {\em Amer. J. Math.}, 98(3):739--749, 1976.

\bibitem{MR0399452}
Troels J{\o}rgensen and Maire Kiikka.
\newblock Some extreme discrete groups.
\newblock {\em Ann. Acad. Sci. Fenn. Ser. A I Math.}, 1(2):245--248, 1975.

\bibitem{MR1241870}
Linda Keen and Caroline Series.
\newblock Pleating coordinates for the {M}askit embedding of the
  {T}eichm\"uller space of punctured tori.
\newblock {\em Topology}, 32(4):719--749, 1993.

\bibitem{MR2107659}
Changjun Li, Makito Oichi, and Hiroki Sato.
\newblock J\o rgensen groups of parabolic type {II} (countably infinite case).
\newblock {\em Osaka J. Math.}, 41(3):491--506, 2004.

\bibitem{MR2205423}
Changjun Li, Makito Oichi, and Hiroki Sato.
\newblock J\o rgensen groups of parabolic type {I} (finite case).
\newblock {\em Comput. Methods Funct. Theory}, 5(2):409--430, 2005.

\bibitem{MR2153913}
Changjun Li, Makito Oichi, and Hiroki Sato.
\newblock J\o rgensen groups of parabolic type {III} (uncountably infinite
  case).
\newblock {\em Kodai Math. J.}, 28(2):248--264, 2005.

\bibitem{MR0422434}
Wilhelm Magnus, Abraham Karrass, and Donald Solitar.
\newblock {\em Combinatorial group theory}.
\newblock Dover Publications, Inc., New York, revised edition, 1976.
\newblock Presentations of groups in terms of generators and relations.

\bibitem{MR1913879}
David Mumford, Caroline Series, and David Wright.
\newblock {\em Indra's pearls}.
\newblock Cambridge University Press, New York, 2002.
\newblock The vision of Felix Klein.

\bibitem{MR2107132}
Makito Oichi.
\newblock A fundamental polyhedron for the figure-eight knot group.
\newblock {\em Topology Appl.}, 146/147:15--19, 2005.

\bibitem{OS}
Makito Oichi and Hiroki Sato.
\newblock J\o rgensen numbers of discrete groups.
\newblock {\em S\=urikaisekikenky\=usho K\=oky\=uroku}, 1519:105--118, 2006.

\bibitem{MR608526}
R.~P. Osborne and H.~Zieschang.
\newblock Primitives in the free group on two generators.
\newblock {\em Invent. Math.}, 63(1):17--24, 1981.

\bibitem{MR1759686}
Hiroki Sato.
\newblock One-parameter families of extreme discrete groups for {J}\o rgensen's
  inequality.
\newblock In {\em In the tradition of {A}hlfors and {B}ers ({S}tony {B}rook,
  {NY}, 1998)}, volume 256 of {\em Contemp. Math.}, pages 271--287. Amer. Math.
  Soc., Providence, RI, 2000.

\bibitem{MR2032679}
Hiroki Sato.
\newblock The {P}icard group, the {W}hitehead link and {J}\o rgensen groups.
\newblock In {\em Progress in analysis, {V}ol. {I}, {II} ({B}erlin, 2001)},
  pages 149--158. World Sci. Publ., River Edge, NJ, 2003.

\bibitem{MR2199365}
Hiroki Sato.
\newblock The {J}\o rgensen number of the {W}hitehead link group.
\newblock {\em Bol. Soc. Mat. Mexicana (3)}, 10(Special Issue):495--502, 2004.

\bibitem{STY}
Caroline Series, Ser~Peow Tan, and Yasushi Yamashita.
\newblock The diagonal slice of schottky space.
\newblock to appear in Algebr. Geom. Topol., {\tt arXiv:1409.6863}.

\bibitem{MR2370281}
Ser~Peow Tan, Yan~Loi Wong, and Ying Zhang.
\newblock Generalized {M}arkoff maps and {M}c{S}hane's identity.
\newblock {\em Adv. Math.}, 217(2):761--813, 2008.

\bibitem{MR3289108}
A.~Yu. Vesnin and A.~V. Masle{\u\i}.
\newblock On {J}\o rgensen numbers and their analogs for groups of figure-eight
  orbifolds.
\newblock {\em Sibirsk. Mat. Zh.}, 55(5):989--1000, 2014.

\bibitem{YR}
Ryosuke Yamazaki.
\newblock J\o rgensen numbers on the {R}iley slice.
\newblock in preparation.

\end{thebibliography}

\end{document}